\newtheorem{theorem}{Theorem}[section]
\newtheorem{lemma}[theorem]{Lemma}
\newtheorem{corollary}[theorem]{Corollary}
\newtheorem{definition}[theorem]{Definition}
\newtheorem{proposition}[theorem]{Proposition}
\def\mb{\mathbb}
\def\mc{\mathcal}
\def\L{\Lambda}
\def\k{\mb K}
\def\C{\mb C}
\def\Z{\mb Z}
\def\Q{\mb Q}
\def\ph{\varphi}
\def\oL{\ol L}
\def\t{\times}
\def\cl{\colon}
\def\ol{\overline}
\def\wt{\widetilde}
\renewcommand{\P}{{\mb P^1}}
\def\wdw{\wedge\dots\wedge}
\def\bs{\backslash}
\newcommand{\res}[2]{\left.{#1}\right|_{#2}}
\DeclareMathOperator{\ts}{\partial}
\DeclareMathOperator{\ord}{ord}
\DeclareMathOperator{\im}{Im}
\DeclareMathOperator{\spec}{Spec}
\DeclareMathOperator{\supp}{supp}
\begin{document}
\title{On functional equations for Chow polylogarithms}
\author{Vasily Bolbachan\footnote{This paper is an output of a research project implemented as part of the Basic Research Program at the National Research University Higher School of Economics (HSE University). This paper was also supported in part by the contest "Young Russian Mathematics"}}

\maketitle
\begin{abstract}
Chow polylogarithms are some special functions arising in explicit description of the Beilinson regulator map. The most interesting functional equation for this function reflects its vanishing on the boundary in the Bloch's cycle complex. We show that this functional equation formally follows from more simple ones, namely skew-symmetry, functoriality and multiplicativity. 


To prove this, we study some analogue of Bloch's cycle complex and establish for this complex an analogue Beilinson-Soule vanishing conjecture.

A. Goncharov defined a group of functional equations for classical polylogarithms. We show that any such functional equation formally follows from functional equations for Chow polylogarithms stated above. 

\end{abstract}

\section{Introduction}

Let $X$ be a smooth proper variety over $\C$.  A. Goncharov \cite{goncharov2005polylogarithms} constructed an explicit map from the motivic cohomology of $X$ to its Deligne cohomology  (see also \cite{gil2011goncharov}). In the simplest case $X=\spec \C$ this construction is reduced to so-called Chow polylogarithms. Let $Y$ be $(m-1)$-dimensional variety. We write $\C(Y)^\t$ for the multiplicative group of non-zero rational functions on $Y$. Define a map $\theta_Y\cl \L^{2m-1}\C(Y)^\t\to\mb C$ by the following formula
$$\theta_Y(f_1\wdw f_{2m-1})=\int_{Y(\C)}\log |f_1|d\log |f_2|\wdw d\log|f_{2m-1}|.$$
\emph{The Chow polylogarithm} $\mc P_m$ is defined by the formula
$\mc P_m(Y,a)=\theta_Y(a)$. Up to some simple constant \cite{goncharov2001grassmannian}, this definition coincides with the definition given in \cite{goncharov2005polylogarithms}. 

Our immediate goal is to formulate functional equations for this function and investigate  relationships between them.

\emph{An alteration} is a proper morphism which is generically finite. Let $Y_1, Y_2$ be two proper varieties and $\ph\cl Y_1\to Y_2$ be an alteration. The following formula follows directly from the definition:
\begin{equation*}
    \tag{*}
    \mc P_m(Y_2,a)=\dfrac 1{\deg\ph}\mc P_m(Y_1,\ph^*(a)).
\end{equation*}

Let $(F,\nu)$ be a discrete valuation field. We denote by $\ol F_\nu$ the residue field. The following proposition was proved in \cite{goncharov1995geometry}.

\begin{proposition}
    Let $k\geq 1$. There is a unique map $\ts_\nu\cl\L^k F^\t\to \L^{k-1}\overline{F}_\nu^{\t}$ such that the following property holds. Let $a_1,\dots, a_k\in F^\t$ and assume that $\nu(a_2)=\dots=\nu(a_k)$. Then $\ts_D(a_1\wdw a_k)=\nu(a_1)\cdot(\overline{a_2}\wdw \overline{a_k})$.
\end{proposition}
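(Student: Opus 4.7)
The plan is to construct $\ts_\nu$ by choosing a uniformizer of $\nu$ and then to argue that the defining property forces a single possible value on a generating set, simultaneously yielding existence, uniqueness, and independence of the choice. Fix a uniformizer $\pi$ and let $U \subset F^\t$ denote the group of units (elements of valuation zero). The decomposition $a = \pi^{\nu(a)}\cdot(a\pi^{-\nu(a)})$ gives an isomorphism of abelian groups $F^\t \cong \Z\pi \oplus U$, hence
\[
\L^k F^\t \;\cong\; \L^k U \;\oplus\; \bigl(\Z\pi \wedge \L^{k-1} U\bigr).
\]
Define $\ts_\nu^\pi$ to vanish on the first summand and to send $\pi \wedge u_2 \wdw u_k$ (with each $u_i \in U$) to $\ol{u_2} \wdw \ol{u_k}$ via the reduction map $U \twoheadrightarrow \ol F_\nu^\t$.

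To verify the defining property, take $a_1 \in F^\t$ and units $a_2,\dots,a_k$ and write $a_1 = \pi^{\nu(a_1)} u_1$. Expanding by bilinearity of $\wedge$ in the first slot gives
\[
a_1 \wdw a_k = \nu(a_1)\,(\pi \wedge a_2 \wdw a_k) + u_1 \wedge a_2 \wdw a_k.
\]
The second term lies in $\L^k U$ and is killed, while the first maps to $\nu(a_1)\,(\ol{a_2} \wdw \ol{a_k})$, as required. For uniqueness, every pure wedge $a_1 \wdw a_k$ expands, via multilinearity in each slot together with $\pi \wedge \pi = 0$, into a $\Z$-linear combination of wedges either entirely in $U$ or of the form $\pi \wedge u_2 \wdw u_k$ with all $u_i \in U$; the stated property pins down the value of any candidate on both types of wedges (zero in the first by the $\nu(u_1)=0$ instance, and $\ol{u_2}\wdw\ol{u_k}$ in the second by the $a_1 = \pi$ instance), so uniqueness follows, and as a byproduct $\ts_\nu^\pi$ is independent of $\pi$.

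The main obstacle I foresee is the bookkeeping in the multilinearity expansion used for existence: one must keep careful track of signs and of the placement of $\pi$-factors when rewriting a wedge involving arbitrary elements of $F^\t$ as a combination of the two canonical types, and check that the sum of images is independent of the order in which factors are split. Once this is set up cleanly, the remaining verifications are formal.
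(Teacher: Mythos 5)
Your proof is correct. Note that the paper itself does not prove this proposition at all---it simply cites \cite{goncharov1995geometry}---so there is no in-paper argument to compare against; your uniformizer-splitting construction (the decomposition $F^\times \cong \pi^{\mathbb Z}\times U$, hence $\Lambda^k F^\times \cong \Lambda^k U \oplus \bigl(\mathbb{Z}\pi \wedge \Lambda^{k-1}U\bigr)$, with $\partial_\nu$ defined as projection onto the second summand followed by the map induced by reduction of units) is the standard argument, and it is essentially the one underlying the cited result. Two small remarks. First, the hypothesis ``$\nu(a_2)=\dots=\nu(a_k)$'' in the statement must be read as ``$=0$'' (otherwise the residues $\overline{a_i}$ are not defined); this is how you read it, and it is the intended reading. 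Second, the sign bookkeeping you flag as the main obstacle is needed only in the uniqueness step, where a general pure wedge $a_1\wedge\dots\wedge a_k$ is expanded, via $a_i=\pi^{\nu(a_i)}u_i$ and antisymmetry, into wedges of units and wedges of the form $\pm\,\pi\wedge u_2\wedge\dots\wedge u_k$; existence requires no such expansion, since your map is a composition of canonical homomorphisms (projection, identification of the summand with $\Lambda^{k-1}U$, and functoriality of $\Lambda^{k-1}$ applied to $U\twoheadrightarrow \overline{F}_\nu^\times$) and is therefore automatically well defined. Your observation that uniqueness forces independence of the choice of $\pi$ is also the right way to dispose of that point.
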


Let $Y$ be a smooth variety and $D$ be an irreducible  divisor. Denote by $\nu_D$ the map $\ts_{\nu_D}$ where $\nu_D$ is a discrete valuation corresponding to $D$. Let us formulate another functional equation. Let $Y$ be a proper $m$-dimensional variety and $f_1,\dots, f_{2m}$ are non-zero rational functions on $Y$. Assume that the union of zeros and poles of the functions $f_i$ is a snc divisor on $Y$. Set $a=f_1\wdw f_{2m}$. Then

\begin{equation}
  \tag{**}
  \sum\limits_{D\subset Y}\mc P_m(a,\ts_D(a))=0.
\end{equation}

 To investigate the relationship between $(*)$ and $(**)$ we need the following definition.

\begin{definition}
    Denote by $\widehat C_m(\k)$ a vector space over $\Q$, freely generated by isomorphism classes of the pairs $[Y,a]$, where $Y$ is smooth projective variety of dimension $m-1$ and $a\in\L^{2m-1}\k(Y)^\t$. Denote by $\widehat B_m(\k)$ the quotient of $\widehat C_m(\k)$ by the following relations:
    \begin{enumerate}
        \item Let $\ph\cl Y_1\to Y_2$ be an alteration and $a\in\L^{2m-1}(\k(Y_2)^\t)$. Then
        $$[Y_2,a]=\dfrac 1{\deg\ph}[Y_1,\ph^*(a)].$$
        \item For any $a,b\in \L^{2m-1}(\k(Y)^\t)$, we have
         $$[Y,a+b]=[Y,a]+[Y,b].$$
    \end{enumerate}
\end{definition}

 The Chow polylogarithm $\mc P_m$ provides the canonical map $\widehat B_m(\C)\to\C$ given by the formula $[Y,a]\mapsto \mc P_m(Y,a)$. This implies that the group $\widehat B_m(\C)$ is non-zero. 

\begin{theorem}
\label{th:reciprocity_law}
    Let $m\geq 1$ and $Y$ be a smooth proper $m$-dimensional variety. Let $f_1,\dots, f_{2m}$ be non-zero rational functions on $Y$ and assume that the union of zeros and poles of the functions $f_i$ is a snc divisor on $Y$. Set $a=f_1\wdw f_{2m}$. Then the following formula holds:
    $$\sum\limits_{D\subset X}[D,\ts_D(a)]=0\in \widehat B_m(\k).$$
\end{theorem}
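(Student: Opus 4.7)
I would attempt the proof by induction on $m$, using the two relations defining $\widehat B_m(\k)$ simultaneously: bilinearity (relation (2)) to decompose $\ts_D a$ entry by entry via the formula of the preceding proposition, and alteration invariance (relation (1)) to simplify $Y$ and the divisors $D$ to standard forms.

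\emph{Base case $m=1$.} Here $Y$ is a smooth projective curve and $a=f_1\wedge f_2$. Relation (1) applied to the alteration $f_1\colon Y\to\P$ replaces $(Y,a)$ by a sum of pairs $(\P,\mathrm{id}\wedge g)$; with further alterations we may even assume every divisor $D$ appearing on the right-hand side is $\k$-rational, so that each $[D,\ts_D a]$ is represented by an element of $\k^\times\otimes\Q$. The identity $\sum_D[D,\ts_D a]=0$ then reduces to the classical Weil reciprocity product formula for tame symbols on $\P$.

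\emph{Inductive step.} Suppose the result holds below dimension $m$. Starting from $(Y,a)$ with $\dim Y=m$ and $a=f_1\wdw f_{2m}$, consider the morphism $\pi\colon Y\to\P$ determined by $f_{2m}$; by relation (1) we may replace $Y$ by an alteration for which $\pi$ is flat and the total support of $\sum\Div(f_i)$ is snc and respected by $\pi$. Classify the components of this support as horizontal (dominating $\P$) or vertical (contained in a fiber $\pi^{-1}(t)$). For vertical $D$ above $t\in\P$, relation (2) splits $\ts_D a$ into a main term involving $\ts_D(f_1\wdw f_{2m-1})$ weighted by $\nu_D(f_{2m})$ plus a correction where the residue of $f_{2m}$ appears; when $t\ne 0,\infty$ one has $\nu_D(f_{2m})=0$, so only the correction survives, and it fits exactly into the inductive hypothesis applied to the fiber $\pi^{-1}(t)$ with its $2m-1$ restricted functions. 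The horizontal contribution is handled symmetrically, applying the inductive hypothesis on each horizontal divisor $D$ and using $f_{2m}|_D$ as one of its $2m-1$ arguments.

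\emph{Main obstacle.} The delicate point is matching the horizontal contributions to the vertical contributions integrated over $\P$: morally this amounts to an instance of fibered Weil reciprocity along $\pi$, reducing the inductive step to the base case $m=1$ applied to $\P$. Secondary technical points are (a) that alterations may force passage to finite extensions of $\k(D)$, which one must absorb using relation (1); and (b) the coherence of repeated alterations so that the snc hypothesis persists at each stage. Should this direct geometric argument prove intractable, an alternative route (suggested by the abstract) is to encode both sides in a complex analogous to Bloch's cycle complex and deduce the vanishing from a Beilinson--Soul\'e style vanishing theorem for that complex.
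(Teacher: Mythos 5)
There is a genuine gap, and it sits exactly at the point you flag as the ``main obstacle.'' Your induction does not close, for a structural reason: every term $[D,\ts_D(a)]$ in the sum --- vertical or horizontal --- is an $(m-1)$-dimensional variety equipped with an element of $\L^{2m-1}\k(D)^\t$, i.e.\ an element of $\widehat B_m(\k)$, whereas your inductive hypothesis is a statement about $2(m-1)$ functions on $(m-1)$-dimensional varieties with conclusion in $\widehat B_{m-1}(\k)$. The relations (1) and (2) provide no map $\widehat B_{m-1}(\k)\to \widehat B_m(\k)$, and the numerology ($2m-1$ wedge factors versus $2m-2$) does not match, so the inductive hypothesis cannot ``fit exactly'' into either contribution. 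Concretely: for a vertical divisor $D$ over $t\ne 0,\infty$ one gets $\ts_D(a)=\pm\,\ts_D(f_1\wdw f_{2m-1})\wedge t$ with $t\in\k^\t$ a \emph{constant}; to kill such terms you need the statement that a wedge containing a constant entry vanishes in $\widehat B_m(\k)$ (Corollary \ref{cor:constant_zero} of the paper). That statement is not the reciprocity law in weight $m-1$: the paper deduces it from the vanishing of the \emph{entire} group $\L(\k,m-1)_0$ (Theorem \ref{th:BS_Lambda}), which is strictly stronger than Theorem \ref{th:reciprocity_law} for $m-1$. Likewise, the ``fibered Weil reciprocity along $\pi$'' that is supposed to match horizontal against vertical contributions is not an available input: derived formally from relations (1) and (2), it is essentially the theorem you are trying to prove, so the plan begs the question at its central step.

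Two further points. First, the base case reduction is flawed over a general field of characteristic zero: no alteration of $Y$ makes all the divisors $D$ $\k$-rational (e.g.\ over $\Q$, any point above a zero of $x^2+1$ has residue field containing $\Q(i)$), so one needs a norm/Galois-descent mechanism --- in the paper this is Lemma \ref{lemma:Galois_descent}, via the norm on Milnor $K$-theory --- which your plan never invokes. Second, the route you mention only in your closing sentence is the paper's actual proof, and it is worth seeing why it avoids your difficulty: the paper proves that the group $\L(\k,m)_0$ containing the class $[Y,a]$ is itself zero (via the filtration $\mc F$, Galois descent, and birational symmetry arguments), and since the residue sum $\sum_D[D,\ts_D(a)]$ is the value of a well-defined differential $d\cl\L(\k,m)_0\to\L(\k,m)_1=\widehat B_m(\k)$ on $[Y,a]$, it vanishes for trivial reasons. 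If you want to salvage an induction on $m$, the inductive statement must be strengthened from the reciprocity law to this degree-wise vanishing; inducting on the reciprocity law alone is too weak to handle even the vertical terms.
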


In other words, $(*)$ formally implies $(**)$. The case $m=1$ gives classical Weil reciprocity law. This statement is a natural generalization of Weil reciprocity law to a variety of arbitrary dimension. We believe that the group $\widehat B_m(\k)$ has the same relationship to Chow polylogarithms as the classical Bloch group \cite{goncharov1995geometry} $B_m(\k)$ has to classical polylogarithms.

Let $P\in\k[x_1,\dots, x_{m-1}]$. Define the following element of the group $\widehat B_m(\k)$:
$$\gamma_P=[(\P)^{m-1}, (1-x_1)\wedge x_1\wdw (1-x_{m-1})\wedge x_{m-1}\wedge P(x_1,\dots, x_{m-1})].$$ The following statement follows from the proof of Theorem \ref{th:reciprocity_law}.

\begin{theorem}
\label{th:generators_of_Bloch_group}
    The group $\widehat B_m(\k)$ is generated by the elements of the form $\gamma_P$.
\end{theorem}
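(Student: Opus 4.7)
My plan is to reduce every generator $[Y,a]\in\widehat B_m(\k)$ to a $\Q$-linear combination of $\gamma_P$'s by combining the two defining relations of $\widehat B_m(\k)$ with the cycle-complex machinery that is built in the paper to prove Theorem~\ref{th:reciprocity_law}. So the argument I sketch runs in parallel with, and draws its main technical input from, that proof.

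First, by the multilinearity relation I may assume $a=f_1\wdw f_{2m-1}$ is a pure wedge. The tuple $(f_1,\dots,f_{2m-1})$ defines a rational map $\ph_a\cl Y\ard(\P)^{2m-1}$ whose image $Z$ has dimension at most $m-1$. When $\dim Z<m-1$ the functions $f_i$ all lie in a subfield of $\k(Y)$ of transcendence degree strictly less than $m-1$, and the class $[Y,a]$ vanishes; this is a Nesterenko--Suslin type statement, obtained as a consequence of the Beilinson--Soul\'e-type vanishing for the cycle complex $\wt\W$ announced in the abstract. When $\dim Z=m-1$, a smooth projective alteration $\wt Z\to Z$ together with the alteration relation reduces the problem to the case where $Y$ is itself a smooth projective model embedded birationally in $(\P)^{2m-1}$, with $a$ equal to the restriction of the coordinate wedge $\res{x_1\wdw x_{2m-1}}{Y}$.

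Next, I identify these ``coordinate'' classes $[Y,\res{x_1\wdw x_{2m-1}}{Y}]$ with the degree-zero generators of the complex $\wt\W$. The Beilinson--Soul\'e-type vanishing for $\wt\W$ implies that, modulo boundaries of elements of $\wt\W$ of higher amplitude, every such class is equivalent to one supported on the image of the explicit map $(\P)^{m-1}\to(\P)^{2m-1}$ given by $(x_1,\dots,x_{m-1})\mapsto(1-x_1,x_1,\dots,1-x_{m-1},x_{m-1},P)$ for some $P\in\k[x_1,\dots,x_{m-1}]$. Pulling back along this alteration, and using the multilinearity relation to split the possibly composite polynomial into irreducible pieces, yields a combination of elements of the form $\gamma_P$.

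The main obstacle is the Beilinson--Soul\'e-type vanishing for $\wt\W$, which is the technical heart of the paper and also what powers the reciprocity law of Theorem~\ref{th:reciprocity_law}. Once that vanishing is in hand, the two relations in the definition of $\widehat B_m(\k)$ are strong enough to carry out the rewriting; the only further care needed is to track the rational factors $1/\deg\ph$ produced by the alteration relation, which causes no difficulty because $\widehat B_m(\k)$ is a $\Q$-vector space.
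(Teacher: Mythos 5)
Your outline reproduces the easy outer layer of the paper's argument (reduce to a pure wedge by multilinearity, push forward along the map defined by the functions, discard the degenerate case of small transcendence degree), but the step that does all the work is missing, and the tool you invoke for it cannot supply it. A generator $[Y,a]$ of $\widehat B_m(\k)$, with $\dim Y=m-1$ and $a\in\L^{2m-1}\k(Y)^\t$, sits in degree $j=1$ of the complex $\L(\k,m)$, whereas the Beilinson--Soul\'e-type vanishing (Theorem \ref{th:BS_Lambda}) only asserts $\L(\k,m)_j=0$ for $j\leq 0$; it says nothing about degree~$1$. Moreover your phrase ``modulo boundaries'' is illegitimate on two counts: the theorem concerns the group $\widehat B_m(\k)=\L(\k,m)_1$ itself, not its quotient by $\im d$ (that would be a cohomology group, a different statement); and by that very vanishing theorem the source $\L(\k,m)_0$ of the incoming differential is zero, so there are no boundaries at all and ``equivalence modulo boundaries'' is just equality. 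Consequently your central claim --- that every coordinate class $[Y,\res{x_1\wdw x_{2m-1}}{Y}]$ is equivalent to one supported on the image of $(x_1,\dots,x_{m-1})\mapsto(1-x_1,x_1,\dots,1-x_{m-1},x_{m-1},P)$ --- is an assertion of (essentially) the theorem being proved, with no argument behind it.

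What actually powers the paper's proof is not the vanishing theorem but Theorem \ref{th:about_filtration}, of which the vanishing is itself a corollary: $\widehat B_m(\k)=\L(\k,m)_1=\mc F_{m-1}$, i.e.\ every class is a combination of classes in Steinberg shape $[Y,f_1\wedge(1-f_1)\wdw f_{m-1}\wedge(1-f_{m-1})\wedge g]$. Manufacturing the factors $f_i\wedge(1-f_i)$ is the hard part, and it is done by Galois descent via the norm map on Milnor $K$-theory (Lemma \ref{lemma:Galois_descent}), combined with Lemmas \ref{lemma:tr.deg.zero}, \ref{lemma:bir} and \ref{lemma:about_filtration_2}; nothing in your sketch replaces this mechanism. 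Two further inaccuracies: the vanishing of $[Y,a]$ when the $f_i$ generate a field of transcendence degree $<m-1$ is Lemma \ref{lemma:tr.deg.zero}, an input cited from an earlier paper and used to \emph{prove} the vanishing theorem, so your dependency runs backwards; and even after reaching the Steinberg form one still needs the paper's concluding case analysis on the map $(f_1,\dots,f_{m-1})\cl Y\to(\P)^{m-1}$ --- the dominant case yields $\gamma_Q$ directly, while the non-dominant case is killed by Lemma \ref{lemma:about_filtration_2} because $\mc F_m=0$ in this degree --- a step your proposal does not address.
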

\subsection{The complex $\L$ and Beilinson-Soule vanishing conjecture}
The definition of cubical higher Chow groups can be found in \cite{levine1994bloch}. Let $\square^n=(\P\bs\{0\})^n$. Subvarieties given by the equations of the form $z_i=0,\infty$ are called \emph{faces}. The group $z^m(n)$ is some subquotient of  codimension $m$ algebraic cycles on $\square^n$ which are in good position with respect to faces. The graded abelian group $z^m(2m-*)$ forms a chain complex where the differential is given by the sign sum of restrictions to codimension $1$ faces. It is known that the cohomology of this complex is isomorphic to the motivic cohomology of $\k$.

To represent an algebraic cycle in $\square^n$ we can take an algebraic variety $Y$ of dimension $n-m$ together with $n$ non-zero rational functions $f_1,\dots, f_n$. The cycle is $\psi_*([Y])\cap\square^n$, where the map $\psi\cl X\to (\P)^n$ is given by the formula $\psi(z)=(f_1(z),\dots, f_n(z))$. The idea of \cite{bolbachan2023chow,bol_2024} was to replace the data $(Y, f_1,\dots, f_n)$ by the data $(Y, a)$ where $a\in \L^n(\k(Y)^\t)$. The differential is given by the sum of the residue maps.

Let $X$ be an algebraic variety and $m, j\in\Z$. Set $p=\dim X+m-j, n=2m-j$. Denote by $\L(X,m)_j$ a vector space over $\Q$, freely generated by the triples $[Y,a,\psi]$, where: $Y$ is a variety of dimension $p$, $a\in\L^n\k(Y)^\t$ and $\psi\cl Y\to X$ is a proper morphism. The relations has the following form:
   \begin{enumerate}
        \item Let $[Y_2,a,\psi]\in \L(X,m)_j$ and $\ph\cl Y_1\to Y_2$ be an alteration. Then
        $$[Y_2,a,\psi]=\dfrac 1{\deg\ph}[Y_1,\ph^*(a),\psi\circ\ph].$$
        \item Let $[Y,a,\psi], [Y,b,\psi]\in\L(X,m)_j$. Then
         $$[Y,a+b,\psi]=[Y,a,\psi]+[Y,b,\psi].$$
    \end{enumerate} 
    In the case $X=\spec\k$ we write $[Y,a]$ instead of $[Y,a,\psi]$ and denote this complex by $\L(\k,m)$.
 The differential in the complex $\L(X,m)$ is defined as follows. Let $f_1,\dots, f_n\in \k(Y)^\t$ and $a=f_1\wdw f_n$. Taking resolution of singularities, if necessary, we can assume that the union of zeros and poles of $f_i$ is a snc divisor.  By definition
$$d([Y,a,\psi])=\sum\limits_{D\subset Y}[D,\ts_D(a),\psi\circ i_D].$$
In this formula the sum is taken over all irreducible divisors and $i_D$ is the canonical embedding $D$ into $Y$. It was checked in \cite{bol_2024} that $d$ is well defined and $d^2=0$.

The complex $\L(X,m)$ is in many respects similar to the complex calculating higher Chow groups. We conjectured that these two complexes have the same cohomology. While it is not known in full generality, we proved this statement in the case when $X$ is a spectrum of a field and the degree is greater than or equal to the motivic weight minus one. It is natural to ask whether the complex $\L(X,m)$ satisfies an analog of the Beilinson-Soule vanishing conjecture. Standard reductions show that we can assume that $X=\spec\k$.

\begin{theorem}
\label{th:BS_Lambda}
    We have $\L(\k,m)_j$ is zero for $j\leq 0$ and $m\geq 1$.
\end{theorem}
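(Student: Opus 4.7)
The strategy is to reduce any generator $[Y,a]\in\L(\k,m)_j$ to zero using only alterations and multilinearity. Since $j\leq 0$ and $m\geq 1$ we have $n=2m-j>p=\dim Y=m-j$, so every simple wedge $a=f_1\wdw f_n$ contains strictly more factors than the dimension of $Y$. The plan is to exploit this excess to construct, for each generator, an alteration $\ph\cl Y'\to Y$ whose pullback $\ph^*(a)$, once expanded by multilinearity, yields a scalar relation of the form $\lambda\cdot[Y,a]=[Y,a]$ with $\lambda\neq 1$, which forces $[Y,a]=0$ in a $\Q$-vector space.

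First I would carry out the formal reductions. By relation (2) it suffices to treat a simple tensor $a=f_1\wdw f_n$, and by resolution of singularities (which is an alteration) I may assume $Y$ is smooth projective. The rational map $\psi=(f_1,\dots,f_n)\cl Y\ard(\P)^n$ has image closure $Z$ of dimension at most $p<n$. When $\psi$ is generically finite onto $Z$, the alteration relation yields $[Y,a]=(\deg\psi)\,[Z,z_1|_Z\wdw z_n|_Z]$, reducing us to the case of the wedge of coordinate restrictions on a subvariety $Z\subsetneq(\P)^n$ of dimension $p<n$. The case in which $\psi$ has positive-dimensional fibres I would handle separately by first passing to a further alteration that splits off the fibre direction.

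The heart of the argument is already visible in the base case $m=1$, $j=0$. By multilinearity every $[\P,f\wedge g]$ is a sum of elementary pieces of three types: $c\wedge d$ with $c,d\in\k^\t$; $c\wedge(t-\alpha)$; and $(t-\alpha)\wedge(t-\beta)$. The involution $t\mapsto\alpha+\beta-t$ together with the torsion of $-1$ kills the third type. Any alteration of degree $k\geq 2$ that fixes a constant argument (for example $w\mapsto w^k$) kills the first type, since it yields $[\P,c\wedge d]=\tfrac1k[\P,c\wedge d]$. The degree-$2$ alteration $\ph\cl w\mapsto w-1/w$ handles the second type: a direct computation gives $\ph^*(c\wedge t)=c\wedge(w-1)+c\wedge(w+1)-c\wedge w$, and combined with the translation invariance $[\P,c\wedge(w\pm 1)]=[\P,c\wedge w]$ (obtained from the iso alterations $w\mapsto w\mp 1$) this yields $X=\tfrac12(X+X-X)=\tfrac12 X$ for $X=[\P,c\wedge t]$, hence $X=0$.

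For general $j\leq 0$, the plan is to lift these one-variable tricks to a rational model $Y=(\P)^p$ by acting coordinate-by-coordinate with the degree-$2$ alteration $w_i\mapsto w_i-1/w_i$ and with translations $w_i\mapsto w_i+\alpha$, producing scalar relations that collapse each elementary piece. The main obstacle will be the transition from the initial $Z\subsetneq(\P)^n$ obtained above to such a rational model: the one-variable alterations of $(\P)^n$ do not in general restrict to alterations of $Z$, so one must replace $Z$ by a suitable modification or further cover on which the moves do restrict, and then control the cross-terms that appear when expanding the pullback by multilinearity so that each summand is either zero or equal to the original element up to a known scalar. Handling the positive-dimensional fibre case in Step 1 is similarly delicate, and I expect it to require an auxiliary alteration that uses the excess $n-p=m\geq 1$ to generate a repeated factor in the pullback of $a$, killing it modulo torsion.
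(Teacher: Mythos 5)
Your elementary moves are fine as far as they go: on rational models the degree-$k$ cover fixing constants, the translations, and the involutions do kill the elementary pieces you list (in fact the inversion $t\mapsto t^{-1}$ kills $c\wedge t$ even faster), and your "positive-dimensional fibre" case is precisely the paper's Lemma \ref{lemma:tr.deg.zero}, which it imports from earlier work. But the step you yourself flag as "the main obstacle" is not a deferrable technicality --- it is the entire content of the theorem, and your plan contains no mechanism for it. After your first reduction you hold a generator $[Z,\, z_1|_Z\wdw z_n|_Z]$ where $Z\subsetneq(\P)^n$ is an arbitrary $p$-dimensional subvariety, in general non-rational; already for $m=1$, $j=0$ the variety $Z$ can be an elliptic curve in $(\P)^2$. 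None of your moves apply to such an element: no coordinate-wise self-map of $(\P)^n$ of degree $\geq 2$ restricts to an alteration of $Z$, an elliptic curve has no self-maps acting usefully on the coordinate functions, and "replacing $Z$ by a modification or cover on which the moves do restrict" cannot succeed because the obstruction is birational: $\k(Z)$ is a nontrivial finite extension of the purely transcendental subfield generated by any transcendence basis among the $z_i|_Z$, and your toolkit never descends the wedge symbol along that finite extension.

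That descent is exactly what the paper's proof supplies, and it is where the real input lies. The paper first proves Theorem \ref{th:about_filtration}: after an alteration making $L=\k(f_1,\dots)\subset\k(Y)$ Galois, one averages over the Galois group (legitimate by Lemma \ref{lemma:bir}) and then applies Lemma \ref{lemma:Galois_descent} --- the norm map on Milnor $K$-theory, due to Suslin --- to rewrite the invariant symbol in terms of functions from the rational subfield $L$ plus Steinberg-type terms $h\wedge(1-h)$. Those Steinberg terms are unavoidable, which is why the paper organizes the whole argument around the filtration $\mc F_l$ by the number of $f\wedge(1-f)$ factors: Theorem \ref{th:about_filtration} pushes every element into the deepest piece $\mc F_{m-j}$, which is vacuously zero when $j<0$ (there is no room for $m-j$ such pairs in an $n$-wedge) and is killed when $j=0$ by the single involution $t_1\mapsto 1-t_1$. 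Your proposal, by contrast, never engages with $f\wedge(1-f)$ terms at all, and expanding pullbacks of your degree-$2$ covers by multilinearity will generate precisely such cross-terms with no way to return them to the original shape. Without an idea playing the role of Lemma \ref{lemma:Galois_descent}, the proof does not close.
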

So, surprisingly, the vanishing holds degree-wise. This statement gives some evidence for the original Beilinson-Soule vanishing conjecture. The Theorem \ref{th:reciprocity_law} is a reformulation of the statement that the differential map $d\cl \L(\k,m)_0\to \L(\k,m)_1$ is zero.

\subsection{Application to classical polylogarithms}

The classical $m$-logarithm is defined by the formula $$Li_m(z)=\sum\limits_{k=1}^{\infty}\dfrac{z^k}{k^m}.$$
We need a certain single-valued version of this function $\wt{\mc L_m}$ defined in \cite{Levin_A}.
Denote by $\Q[\P(\k)]_m$ a vector space freely generated by the points of $\P(\k)$. We denote the generators as $\{a\}_m, a\in\P(\k)$. A. Goncharov \cite{goncharov1995geometry} defined a subspace $\mc R_m(\k)\subset \Q[\P(\k)]_m$. If $\k=\C$ and $\sum n_\alpha \{z_\alpha\}_m\in\mc R_m(\k)$ then
$$\sum\limits_{\alpha}n_\alpha \wt{\mc L}_m(z_\alpha)=0.$$
Conjecturally, the group $\mc R_m$ describes all functional equations for the function $\wt{\mc L}_m$. 
The $m$-th Bloch group is defined by the formula $B_m(F)=\Q[\P(F)]_m/\mc R_m(F)$.
Let $Y$ be an algebraic variety and $f\in\P(\k(Y))$. Define an element in $\L^{2m-1}\k(Y\t(\P)^{m-1})^\t$. If $f=0,\infty$, then this element is equal to $0$. Otherwise,
$$\omega_m(Y,f)=(1-x_1)\wedge x_1\wedge (x_1-x_2)\wedge x_2\wdw (x_{m-2}-x_{m-1})\wedge x_{m-1}\wedge (x_{m-1}-f).$$
This form is closely related to the algebraic cycles representing classical polylogarithm in motivic cohomology \cite{levine1994bloch}. 
In the case $Y=\spec\k$ we will denote this element simply by $\omega_m(f)$. We have $[(\P)^{m-1},\omega_m(a)]=\gamma_{Q_a}$, where $Q_a(x_1,\dots, x_{m-1})=x_1\dots x_{m-1}-a$. We have the following result \cite{Levin_A,goncharov2005polylogarithms}:
\begin{proposition}
\label{prop:formula}
    There is a constant $q_m\in\C^\t$ such that for any  $a\in \P(\C)$, we have
    \begin{equation}
    \tag{***}
        {\mc P}_m((\P)^{m-1},\omega_m(a))=q_m\wt{\mc L_m}(a).
    \end{equation}

\end{proposition}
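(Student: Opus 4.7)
The plan is to proceed by induction on $m$, using Fubini's theorem to reduce the $(m-1)$-dimensional integral defining the left-hand side to a one-dimensional integral against $\wt{\mc L}_{m-1}$, and then invoking a known convolution identity for Levin's single-valued polylogarithm to identify the result with $\wt{\mc L}_m$.

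Unwinding $\mc P_m$ and $\omega_m$, the left-hand side equals
\begin{align*}
F_m(a) := \int_{(\P(\C))^{m-1}} \log|1-x_1| \cdot d\log|x_1| \wedge d\log|x_1-x_2| \wdw d\log|x_{m-2}-x_{m-1}| \wedge d\log|x_{m-1}| \wedge d\log|x_{m-1}-a|.
\end{align*}
Only $d\log|x_{m-1}|$ and $d\log|x_{m-1}-a|$ are pure one-forms in $x_{m-1}$; the factor $d\log|x_{m-2}-x_{m-1}|$ involves both $x_{m-2}$ and $x_{m-1}$, but when wedged against the $2$-form $d\log|x_{m-1}|\wedge d\log|x_{m-1}-a|$ its $dx_{m-1}, d\bar x_{m-1}$ components drop out, leaving only the pure $x_{m-2}$-differential piece. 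Applying Fubini and integrating first over $x_1, \ldots, x_{m-2}$ with $x_{m-1}$ held as a parameter, the inner integral is precisely $\mc P_{m-1}((\P)^{m-2}, \omega_{m-1}(x_{m-1}))$. By the inductive hypothesis it equals $q_{m-1} \wt{\mc L}_{m-1}(x_{m-1})$, so
\begin{align*}
F_m(a) = q_{m-1} \int_{\P(\C)} \wt{\mc L}_{m-1}(z) \, d\log|z| \wedge d\log|z-a|.
\end{align*}
The base case $m=2$ reduces directly to a well-known integral representation of the Bloch--Wigner dilogarithm, equal to $q_2\wt{\mc L}_2(a)$ for a suitable nonzero constant.

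To close the induction it remains to show that the convolution $\int_{\P(\C)} \wt{\mc L}_{m-1}(z)\, d\log|z|\wedge d\log|z-a|$ equals $c_m^{-1}\wt{\mc L}_m(a)$ for some $c_m \in \C^\t$; then setting $q_m := q_{m-1}/c_m$ finishes the step. I expect this last identity to be the main obstacle. One approaches it either by extracting it directly from the iterated-integral construction of $\wt{\mc L}_m$ in \cite{Levin_A}, or by showing that both sides are continuous single-valued real-analytic functions on $\P(\C)\bs\{0,1,\infty\}$, vanish at $a=0$, and satisfy the same second-order PDE expressing their mixed $\ts\bar\ts$-derivative in terms of $\wt{\mc L}_{m-1}$ and $d\log|a|$; uniqueness for this boundary-value problem then forces equality up to a nonzero scalar.
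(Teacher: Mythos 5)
The first thing to note is that the paper does not prove this proposition: it is quoted as a known result from \cite{Levin_A,goncharov2005polylogarithms}, so your attempt must be judged as a self-contained proof, and as such it has a genuine gap. Your Fubini reduction itself is sound: since $d\log|x_{m-1}|\wedge d\log|x_{m-1}-a|$ is a top form in the $x_{m-1}$-variable, only the $x_{m-2}$-differential part of $d\log|x_{m-2}-x_{m-1}|$ survives the wedge, the fibre integral over $(x_1,\dots,x_{m-2})$ is exactly $\mc P_{m-1}((\P)^{m-2},\omega_{m-1}(x_{m-1}))$, and absolute convergence of $\theta_Y$ legitimizes the iterated integration. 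But this is the formal part of the argument. What remains --- the base case $m=2$ and the identity
$$\int_{\P(\C)}\wt{\mc L}_{m-1}(z)\,d\log|z|\wedge d\log|z-a| \;=\; c_m^{-1}\,\wt{\mc L}_m(a)$$
--- is precisely the analytic content of the proposition, and you prove neither: the base case is declared ``well-known'' and the convolution identity is explicitly flagged as ``the main obstacle,'' with two possible approaches only gestured at. You have reduced the statement to a statement of essentially the same depth and then stopped; that is a reduction, not a proof.

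Moreover, the PDE-uniqueness route you sketch for the convolution identity would fail as formulated. Computing $\ts\ol\ts_a$ of the convolution requires an integration by parts in $z$ (to exploit $\ts\ol\ts\log|z-a|=\pi i\,\delta_a$), whose boundary terms at $z=0,1,\infty,a$ must be shown to vanish given that $\wt{\mc L}_{m-1}$ grows like a power of $\log|z|$ at the punctures; this already needs the precise differential properties of Levin's function, i.e.\ the very input of \cite{Levin_A}. More seriously, your uniqueness criterion is insufficient: two single-valued real-analytic functions on $\P(\C)\bs\{0,1,\infty\}$ satisfying the same inhomogeneous $\ts\ol\ts$-equation differ by a harmonic function, and requiring that the difference vanish at $a=0$ does not force it to be zero --- for example $\log|1-a|$ is single-valued, harmonic, vanishes at $a=0$, and has admissible logarithmic growth. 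One needs normalizations at all three punctures (equivalently, a classification of harmonic functions of logarithmic growth on the thrice-punctured sphere together with matching asymptotics there) to conclude. Until the convolution identity is actually established by one of your routes carried out in full, the honest course is what the paper does: cite \cite{Levin_A,goncharov2005polylogarithms}.
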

Define a map $\mc T_m\cl \Q[\P(\k)]_m\to \widehat B_m(\k)$ sending 
    $\{a\}_m$ to  $[(\P)^{m-1}, \omega_m(a)].$

\begin{theorem}
\label{th:functional_equations_classical_polylogarithms}
\label{cor:KZ_conjecture}
    The map $\mc T_m$ sends $\mc R_m$ to zero. In particular, we have the well-defined map $\mc T_m\cl B_m(\k)\to \widehat B_m(\k)$.
\end{theorem}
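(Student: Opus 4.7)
The plan is to proceed by induction on $m$, using Goncharov's recursive definition of $\mc R_m(\k)$ and Theorem \ref{th:reciprocity_law} as the vanishing input. Recall that, besides trivial generators such as $\{0\}_m$, $\{\infty\}_m$ (which map to zero via the convention $\omega_m(0)=\omega_m(\infty)=0$), the group $\mc R_m(\k)$ is generated by differences $\alpha(t_0)-\alpha(t_1)$, where $\alpha(t)=\sum n_i\{f_i(t)\}_m\in\Q[\P(\k(t))]_m$ satisfies the homological condition
\[
\delta_m\alpha:=\sum n_i\{f_i(t)\}_{m-1}\otimes f_i(t)\in\mc R_{m-1}(\k(t))\otimes\k(t)^\times
\]
(with the convention that $\delta_2\alpha$ is required to vanish in $\wedge^2\k(t)^\times$ when $m=2$), and one may reduce to $(t_0,t_1)=(0,\infty)$ by a M\"obius change of variable on $\P^1$.

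Given such an $\alpha$, I would lift it to
\[
\Xi_\alpha:=\sum_i n_i\bigl[\P^1\times(\P)^{m-1},\;\omega_m(f_i(t))\wedge t\bigr]\in\L(\k,m)_0,
\]
with $t$ the coordinate on $\P^1$, and invoke Theorem \ref{th:reciprocity_law} to obtain $d\Xi_\alpha=0$ in $\widehat B_m(\k)$. A direct residue computation then splits $d\Xi_\alpha$ into vertical and horizontal parts. The vertical residues along $\{t=t_0\}\times(\P)^{m-1}$ are nonzero only at $t_0\in\{0,\infty\}$ (since only the factor $t$ contributes a nonzero valuation) and assemble into $\mc T_m(\alpha(0))-\mc T_m(\alpha(\infty))$. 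The horizontal residues along $\P^1\times D$ for divisors $D$ of type $\{x_j=0,1\}$ or $\{x_{j-1}=x_j\}$ vanish by wedge-identities such as $x\wedge x=0$ and $[1]=0$; residues along $\{x_j=\infty\}$ vanish via a cancellation crucially using the $\Q$-torsion triviality of $[-1]$ in $\L^1$. The only surviving contribution is from $D=\{x_{m-1}=f_i(t)\}$ and equals
\[
\Psi(\delta_m\alpha):=\sum_i n_i\bigl[\P^1\times(\P)^{m-2},\;\omega_{m-1}(f_i(t))\wedge f_i(t)\wedge t\bigr].
\]

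The key intermediate lemma is that the natural map
\[
\Psi\colon\Q[\P(\k(t))]_{m-1}\otimes\k(t)^\times\to\widehat B_m(\k),\quad\{g\}_{m-1}\otimes h\mapsto[\P^1\times(\P)^{m-2},\,\omega_{m-1}(g)\wedge h\wedge t],
\]
descends to $B_{m-1}(\k(t))\otimes\k(t)^\times$. Granting this, $\delta_m\alpha\in\mc R_{m-1}(\k(t))\otimes\k(t)^\times$ forces $\Psi(\delta_m\alpha)=0$, so $0=d\Xi_\alpha=\mc T_m(\alpha(0)-\alpha(\infty))$, closing the induction. In the base case $m=2$ the lemma is trivial: $\delta_2\alpha=0$ in $\wedge^2\k(t)^\times$ directly gives $\bigl(\sum n_i(1-f_i)\wedge f_i\bigr)\wedge t=0$, so the horizontal contribution vanishes by additivity. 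For $m\geq 3$ the lemma follows from the inductive hypothesis that $\mc T_{m-1}$ kills $\mc R_{m-1}(\k(t))$: for $\xi=\sum c_i\{g_i\}_{m-1}\in\mc R_{m-1}(\k(t))$ there exists an alteration $\varphi\colon Y\to(\P)^{m-2}_{\k(t)}$ reducing $\sum c_i\varphi^*\omega_{m-1}(g_i)$ to zero in the wedge algebra via additivity, and applying the same alteration to $\P^1\times Y$ wedged by $h\wedge t$ annihilates $\Psi(\xi\otimes h)$ in $\widehat B_m(\k)$.

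The main technical obstacle is the case-by-case horizontal residue computation, particularly the cancellations along $\{x_j=\infty\}$ which crucially use the triviality of $[-1]$ rationally. A secondary obstacle is establishing the key lemma: one must verify that alteration and additivity relations in $\widehat B_{m-1}(\k(t))$ transfer through the wedge $-\wedge h\wedge t$ into $\widehat B_m(\k)$. This should follow from the formal compatibility of these operations with taking products of varieties over $\P^1$, but requires careful bookkeeping.
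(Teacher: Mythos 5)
Your overall strategy is the same as the paper's: induct on $m$ over all characteristic-zero fields (so the hypothesis can be applied to $\k(t)$), lift $\alpha$ to a class on $\P\times(\P)^{m-1}$ in $\L(\k,m)_0$, kill its differential by the vanishing theorem (your use of Theorem \ref{th:reciprocity_law} in place of Theorem \ref{th:BS_Lambda} is immaterial, as the former is exactly the statement that $d$ vanishes on $\L(\k,m)_0$), and split the residues into fiber terms giving $\mc T_m(\alpha(0))-\mc T_m(\alpha(\infty))$ and a horizontal term governed by $\wt\delta_m$; this residue computation is the paper's Lemma \ref{lemma:differential_of_polylogarithms}, which you somewhat underestimate, since $d$ is only computed after passing to a model where the divisor is snc, and one must show all divisors contracted by that model contribute zero (Lemmas \ref{lemma:contracted_lies_on_two_divisors} and \ref{lemma:xi:ijk}). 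The genuine gap is in your key lemma, the descent of $\Psi$ to $B_{m-1}(\k(t))\otimes\k(t)^\times$ for $m\geq 3$. You justify it by claiming that the inductive vanishing $\mc T_{m-1}(\xi)=0$ in $\widehat B_{m-1}(\k(t))$, for $\xi=\sum c_i\{g_i\}_{m-1}\in\mc R_{m-1}(\k(t))$, is witnessed by a \emph{single} alteration $\varphi\cl Y\to(\P)^{m-2}_{\k(t)}$ with $\sum c_i\varphi^*\omega_{m-1}(g_i)=0$ in $\L^{2m-3}\k(t)(Y)^\t_\Q$. That does not follow from the definition: vanishing in $\widehat B_{m-1}(\k(t))$ only means the element lies in the span of the defining relations, which may pass through a zig-zag of auxiliary varieties; the system of varieties and alterations is not cofiltered (parallel alterations cannot be equalized), so vanishing in the quotient cannot be promoted to literal vanishing of a pulled-back form at one stage. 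Since this step is precisely the crux of the theorem --- transporting an inductive statement living over $\k(t)$ into an identity over $\k$ --- the proof as written is incomplete.

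There are two ways to close the gap. The paper's way is to never discard the structure maps to $X=\P$: the horizontal term is then an element $\eta$ of the subgroup $\L(X,m+1)'$ of classes dominating $X$, and Lemma \ref{lemma:restriction_to_generic_point} (whose injectivity rests on the localization theorem of \cite{bolbachan2023chow}) identifies $\L(X,m+1)'$ with $\L(\k(t),m+1)$ via restriction to the generic point; over $\k(t)$ the element becomes $\wt{\mc T}^{(2)}_{m-1}(\wt\delta_m(\alpha)\wedge t)=0$ by the inductive hypothesis, whence $\eta=0$. Alternatively, your transfer idea can be made rigorous without localization: define a map $\widehat B_{m-1}(\k(t))\otimes\L^2\k(t)^\t_\Q\to\widehat B_m(\k)$ on generators by $[Y,a]\otimes(b\wedge c)\mapsto[\mc Y,a\wedge b\wedge c]$, where $\mc Y$ is a smooth proper $\k$-model of the $\k(t)$-variety $Y$ (this makes sense because $\k(t)(Y)=\k(\mc Y)$, and is independent of the model by Lemma \ref{lemma:bir}), and then check well-definedness \emph{relation by relation}: an alteration over $\k(t)$ transfers by normalizing a chosen model inside the larger function field, and additivity is immediate. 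Note that in either case the needed construction is spreading out $\k(t)$-varieties to $\k$-models, not "taking products of varieties over $\P^1$" as your closing remark suggests --- the varieties occurring in the relations over $\k(t)$ are not products --- and it is exactly this step, not bookkeeping, that carries the mathematical content.
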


Informally speaking, this theorem says that any functional equation from $\mc R_m$ for the function $\wt{\mc L}_m$ is a formal corollary of $(*)$ and $(***)$. 

A definition of a period can be found in \cite{kontsevich2001periods}. A famous conjecture stated in loc. cit. says that any relations between periods formally follows from the projection formula, Stokes formula and additivity. We have the following corollary:

\begin{corollary}
    Any relation from $\mc R_m$ for the function $\wt{\mc L}_m$ satisfies the statement of the Kontsevich-Zagier conjecture.
\end{corollary}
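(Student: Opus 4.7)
The plan is to trace the vanishing in $\widehat B_m$ back through the explicit integral formula for $\mc P_m$ and identify each step with one of the three operations permitted by Kontsevich-Zagier: the projection formula, the Stokes formula, and additivity.

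First, take a relation $\xi = \sum_\alpha n_\alpha \{z_\alpha\}_m \in \mc R_m(\ol{\Q})$ and apply Theorem \ref{th:functional_equations_classical_polylogarithms}: the image $\mc T_m(\xi) = \sum_\alpha n_\alpha [(\P)^{m-1}, \omega_m(z_\alpha)]$ vanishes in $\widehat B_m(\ol{\Q})$. By the definition of $\widehat B_m$ as a quotient, this vanishing is witnessed by an explicit finite $\Q$-linear combination of defining relations of two types: (i) alteration relations $[Y_2,a]=(\deg\ph)^{-1}[Y_1,\ph^*a]$ and (ii) additivity relations $[Y,a+b]=[Y,a]+[Y,b]$. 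Applying $\mc P_m$ and using Proposition \ref{prop:formula}, the desired identity $q_m\sum_\alpha n_\alpha\wt{\mc L}_m(z_\alpha)=0$ is converted into a sum of $\mc P_m$-images of these relations.

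Next, I would verify that each such image is a Kontsevich-Zagier admissible identity. Using $\log|f|=\tfrac12(\log f+\log\ol f)$ and $d\log|f|=\tfrac12(d\log f+d\log\ol f)$, the integral $\theta_Y(f_1\wdw f_{2m-1})$ rewrites as a $\Q$-linear combination of integrals over $Y(\C)$, viewed as a real algebraic variety, of rational forms algebraic over $\mathbb{R}$; each such integral is a period in the sense of \cite{kontsevich2001periods}. Under $\mc P_m$, relation (i) becomes precisely the projection formula for the proper generically finite map $\ph$, and relation (ii) becomes linearity of the integral in its integrand. Furthermore, the skew-symmetry and multilinearity required to make $\theta_Y$ factor through $\L^{2m-1}\k(Y)^\t$ (equivalently, to make $\mc P_m$ descend to $\widehat B_m$) is Stokes's formula applied to closed logarithmic forms on compact $Y(\C)$, the singularities being integrable.

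The hard part will be checking that the argument underlying Theorem \ref{th:functional_equations_classical_polylogarithms} can be executed entirely within the algebraic category: every intermediate alteration is defined over $\ol{\Q}$, every intermediate form is algebraic, and the splitting of absolute values into holomorphic and antiholomorphic parts remains valid on the relevant semi-algebraic domains, so that after applying $\mc P_m$ one obtains a literal chain of period identities of the three permitted types rather than an argument with transcendental intermediate expressions. Combined with the fact that $q_m$ itself is a period \cite{Levin_A, goncharov2005polylogarithms}, this produces the desired Kontsevich-Zagier admissible derivation of $\sum_\alpha n_\alpha\wt{\mc L}_m(z_\alpha)=0$.
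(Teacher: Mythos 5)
Your proposal is correct and follows essentially the same route as the paper: the paper's (very terse) proof likewise invokes Theorem \ref{th:functional_equations_classical_polylogarithms} to reduce everything to the Kontsevich--Zagier admissibility of the relations $(*)$ and $(***)$, observing that $(*)$ is the projection formula and deferring $(***)$ to an analysis of the proofs in \cite{Levin_A,goncharov2005polylogarithms}. Your write-up merely unfolds this in more detail -- making explicit that vanishing in $\widehat B_m(\k)$ is a finite combination of alteration and additivity relations, that these map under $\mc P_m$ to projection-formula and additivity identities, and that skew-symmetry of $\theta_Y$ rests on Stokes -- with the only soft spot being the same one the paper has, namely that the admissibility of $(***)$ (not just the periodhood of $q_m$) is cited rather than verified.
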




\section{Beilinson-Soule vanishing for the complex $\L(\k,m)$}

\subsection{Preliminaries lemmas}
We need the following two lemmas:
\begin{lemma}
\label{lemma:tr.deg.zero}
    If $tr.deg.(f_1,\dots, f_n)<\dim X$, then
    $$[X,f_1\wdw f_n]=0.$$
\end{lemma}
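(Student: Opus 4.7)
The plan is to reduce, by an alteration, to a product variety $W$ on which $a$ is pulled back from a proper factor and then killed by a squaring self-map.

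Set $d = \dim X$, $r = \mathrm{tr.deg.}_{\k}(f_1, \ldots, f_n)$, and $a = f_1 \wdw f_n$. The rational map $(f_1,\ldots, f_n)\cl X \ard (\P)^n$ factors through its image closure $Z \subset (\P)^n$, a variety of dimension $r$; thus $\k(Z) = \k(f_1,\ldots, f_n) \subset \k(X)$. Because $r < d$, one may choose rational functions $t_1,\ldots, t_{d-r} \in \k(X)^\t$ whose images form a transcendence basis of $\k(X)/\k(Z)$. Together with the map to $Z$, these data determine a generically finite rational map $X \ard W := Z \t (\P)^{d-r}$ over $Z$, corresponding to the finite field extension $\k(Z)(t_1,\ldots, t_{d-r}) \subset \k(X)$.

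I would then take a birational alteration $X' \to X$ resolving the indeterminacies of this rational map, producing a proper morphism $\varphi\cl X' \to W$ (available in characteristic zero by Hironaka). Writing $D = \deg \varphi$ and noting that on $W$ the $f_i$ are pulled back from $Z$, one has $\varphi^*(a) = a$; the alteration relation then gives
\[
[X, a] \;=\; [X', a] \;=\; D \cdot [W, a].
\]

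It remains to show $[W, a] = 0$. Consider the self-morphism
\[
\sigma \cl W \to W, \quad (z, t_1, t_2, \ldots, t_{d-r}) \mapsto (z, t_1^2, t_2, \ldots, t_{d-r}),
\]
a proper alteration of degree $2$ (which exists because $d - r \geq 1$). Since the $f_i$ on $W$ depend only on $z$, one has $\sigma^*(a) = a$, so the alteration relation forces $[W, a] = \tfrac{1}{2}[W, a]$ and hence $[W, a] = 0$. Combined with the previous display, $[X, a] = 0$.

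The main technical obstacle is the resolution step: one must produce a birational alteration $X' \to X$ so that the rational map to $W$ becomes an honest morphism over $Z$, preserving the identity $\varphi^*(a) = a$. This is standard given resolution of singularities and indeterminacies, but requires careful bookkeeping so that the alteration $X' \to X$ really is of degree $1$ and the factorization through $Z$ is maintained.
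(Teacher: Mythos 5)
Your proof is correct, but it takes a genuinely different route from the paper: the paper gives no internal argument for this lemma at all, disposing of it in one line by citing Lemma 3.5 of \cite{bol_2024}, whereas your argument is self-contained and stays entirely inside the defining relations of the group. The key steps all check out: the rational map $(f_1,\dots,f_n)$ factors through its image closure $Z$ of dimension $r<d$; resolving the indeterminacy of $X\ard W=Z\t(\P)^{d-r}$ gives a degree-one alteration $X'\to X$ together with a proper generically finite morphism $\ph\cl X'\to W$ satisfying $\ph^*(a)=a$, whence $[X,a]=[X',a]=\deg\ph\cdot[W,a]$; and the degree-two self-covering $\sigma(z,t_1,t_2,\dots)=(z,t_1^2,t_2,\dots)$ fixes $a$ (which is pulled back from $Z$), so the alteration relation forces $[W,a]=\tfrac 12[W,a]=0$ in a $\Q$-vector space. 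This ``self-map of degree $\ne 1$ fixing the form'' trick is the multiplicative cousin of the automorphism tricks the paper itself uses elsewhere (the involution $t\mapsto t^{-1}$ in Lemma \ref{lemma:about_filtration_2}, and $t_1\mapsto 1-t_1$ in the proof of Theorem \ref{th:BS_Lambda}), so your argument fits the paper's toolkit; what it buys is independence from the external reference. Two bookkeeping remarks: you do not need Hironaka for the resolution step---the closure of the graph of $X\ard W$ inside $X\t W$ already gives a proper birational $X'\to X$ whose projection to $W$ is a morphism, automatically proper because $X'$ is proper; and if one insists that generators $[Y,a]$ have $Y$ smooth projective (as in the definition of $\widehat B_m$), first replace $Z$ by a resolution $\wt Z$---the self-map $\sigma$ still makes sense on $\wt Z\t(\P)^{d-r}$ since it only squares a $\P$-coordinate, and the rest of the argument is unchanged.
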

\begin{proof}
    Follows form \cite[Lemma 3.5.]{bol_2024}
\end{proof}

\begin{lemma}
\label{lemma:Galois_descent}
    Let $L/K$ be a finite Galois extension of fields of characteristic zero. Let $x\in(\L^k L^\t)\otimes\Q$. Assume that $x$ is invariant under the Galois group. Then $x$ can be represented as a linear combination of the element of the  form $a_1\wdw a_k, b_1\wedge (1-b_1)\wedge b_3\wdw b_k,$
        where $a_i\in K$  and $b_i\in L$.

\end{lemma}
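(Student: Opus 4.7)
The plan is to reduce the problem to Milnor $K$-theory. Consider the rational Milnor $K$-group $K_k^M(L)\otimes\Q$ and the quotient map $\pi\cl(\L^k L^\t)\otimes\Q\to K_k^M(L)\otimes\Q$. Since in Milnor $K$-theory the symbol $\{a,a,b_3,\dots,b_k\}$ is $2$-torsion, rationally one has the identification
$$K_k^M(L)\otimes\Q=\bigl((\L^k L^\t)\otimes\Q\bigr)\big/\bigl\langle b_1\wedge(1-b_1)\wedge b_3\wdw b_k\bigr\rangle,$$
so the lemma amounts to showing that $\pi(x)$ lies in the image of the restriction map $\mathrm{res}\cl K_k^M(K)\otimes\Q\to K_k^M(L)\otimes\Q$. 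Indeed, if $\pi(x)=\sum_j c_j\{a_{1,j},\dots,a_{k,j}\}$ with $a_{i,j}\in K$, then lifting this equation back to $(\L^k L^\t)\otimes\Q$ expresses $x$ as a $\Q$-combination of wedges of elements of $K$ modulo Steinberg elements $b_1\wedge(1-b_1)\wedge b_3\wdw b_k$, which is exactly the conclusion.

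The main tool is the Bass--Tate norm $N_{L/K}\cl K_k^M(L)\to K_k^M(K)$. For a finite Galois extension with group $G$, it satisfies the fundamental identity
$$\mathrm{res}\circ N_{L/K}=\sum_{g\in G}g^*\quad\text{on }K_k^M(L).$$
Granting this, Galois-invariance of $\pi(x)$ gives $\sum_g g^*(\pi(x))=|G|\,\pi(x)$, hence $|G|\,\pi(x)=\mathrm{res}(N_{L/K}(\pi(x)))$, and after dividing by $|G|$ (permissible after $\otimes\Q$) we conclude $\pi(x)=\mathrm{res}\bigl(\tfrac{1}{|G|}N_{L/K}(\pi(x))\bigr)$, which lies in the image of restriction.

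The main obstacle is the verification of the norm--restriction identity for $k\ge 2$. In degree $k=1$ it is essentially a tautology: $N_{L/K}(b)=\prod_g g(b)$, which in $L^\t\otimes\Q$ (written additively) is precisely $\sum_g g(b)$. For $k\ge 2$ the Bass--Tate norm is defined inductively using primitive elements for the extension and carries a more delicate compatibility with the Steinberg relations; the identity $\mathrm{res}\circ N_{L/K}=\sum g^*$ is then a classical property of Milnor $K$-theory of finite Galois extensions, proved by induction on $[L:K]$ that reduces to the case of a simple Galois subextension and combines the projection formula $N_{L/K}(\mathrm{res}(y)\cdot z)=y\cdot N_{L/K}(z)$ with the $k=1$ case. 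With this identity granted, the remaining passage from Milnor $K$-theory back to $(\L^k L^\t)\otimes\Q$---via the presentation of the Steinberg ideal above---is routine.
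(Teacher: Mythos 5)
Your proposal is correct and takes essentially the same route as the paper: the paper's proof is a one-line appeal to the existence of the norm map on Milnor $K$-theory (citing Suslin), which is exactly the mechanism you use, via the identity $\mathrm{res}\circ N_{L/K}=\sum_{g\in G}g^*$ and averaging over $G$ after tensoring with $\Q$. Your write-up merely makes explicit the two steps the paper leaves implicit --- the identification of $(\L^k L^\t)\otimes\Q$ modulo Steinberg elements with $K_k^M(L)\otimes\Q$, and the lift of the resulting symbol identity back to the exterior power.
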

\begin{proof}
    This follows from the existence of the norm map on Milnor $K$-theory, see \cite{suslin1979reciprocity}.
\end{proof}

\begin{lemma}
\label{lemma:bir}
    Let $Y$ be a proper variety and $\delta\in \L^n\k(Y)^\t$. For any birational automorphism $\theta$ of $Y$, we have:
    $$[Y,\delta]=[Y,\theta^*(\delta)].$$
\end{lemma}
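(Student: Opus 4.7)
The plan is to reduce the claim to two applications of the alteration relation from the definition of $\L(\k,m)$, after replacing $\theta$ by honest morphisms. Set $Y'$ to be the closure of the graph of $\theta$ inside $Y\t Y$; since $Y$ is proper, so are $Y\t Y$ and $Y'$. Write $\pi_1,\pi_2\cl Y'\to Y$ for the two projections. Both are proper morphisms, and, because $\theta$ is birational, both are birational, hence alterations of degree $1$. By construction $\pi_2=\theta\circ\pi_1$ as rational maps, so on function fields $\pi_2^*=\pi_1^*\circ\theta^*\cl\k(Y)\to\k(Y')$.

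I would then apply the alteration relation three times. With $\varphi=\pi_1$ and datum $\delta$ one gets $[Y,\delta]=[Y',\pi_1^*\delta]$; with $\varphi=\pi_2$ and datum $\delta$ one gets $[Y,\delta]=[Y',\pi_2^*\delta]$; with $\varphi=\pi_1$ and datum $\theta^*\delta$ one gets $[Y,\theta^*\delta]=[Y',\pi_1^*(\theta^*\delta)]$. The identity $\pi_2^*\delta=\pi_1^*(\theta^*\delta)$ on $Y'$ now yields
$$[Y,\theta^*\delta]=[Y',\pi_1^*(\theta^*\delta)]=[Y',\pi_2^*\delta]=[Y,\delta],$$
as desired.

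The only point that needs attention is the step of producing genuine morphisms out of the birational map $\theta$, which is exactly what passing to the graph closure $Y'$ accomplishes. Smoothness of $Y'$ is not needed, since the definition of $\L(\k,m)_j$ allows arbitrary proper varieties; if one prefers smooth projective representatives, one can further replace $Y'$ by a resolution of singularities, which is again an alteration of degree one over each copy of $Y$ via the corresponding $\pi_i$, and the same three-line computation goes through unchanged.
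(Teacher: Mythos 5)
Your proof is correct and is essentially the paper's argument: the paper resolves the indeterminacy of $\theta$ by a proper birational morphism $\ph\cl \wt Y\to Y$ with $\theta\circ\ph$ regular, and your graph closure $Y'$ with projections $\pi_1,\pi_2$ is exactly an explicit construction of such a resolution, with $\pi_2=\theta\circ\pi_1$. The core computation --- two applications of the degree-one alteration relation combined with $\pi_1^*(\theta^*\delta)=\pi_2^*\delta$ --- coincides with the paper's chain $[Y,\theta^*(\delta)]=[\wt Y,\ph^*(\theta^*(\delta))]=[\wt Y,(\theta\circ\ph)^*(\delta)]=[Y,\delta]$.
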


\begin{proof}
    There is a birational morphism $\ph\cl \wt Y\to Y$ such that $\theta\circ\ph$ is regular. So we get
    $[Y, \theta^*(\delta)]=[\wt Y, \ph^*(\theta^*(\delta))]=[\wt Y, (\theta\circ\ph)^*(\delta)]=[Y,\delta].$
\end{proof}

\subsection{The filtration $\mc F$}
We recall that $m$ denotes the motivic weight and $j$ denotes the cohomological degree. The numbers $p$ and $n$ are defined by the formulas $p=m-j, n=2m-j$. The group $\L(\spec\k, m)_j$ is generated by the elements $[Y,a]$ where $Y$ is proper $p$-dimensional variety and $a\in\L^{n}\k(Y)^\t$.
Define a decreasing filtration $\mc F$ on $\L(m)_j$ where $\mc F_l$ is generated by
$$[Y,f_1\wedge (1-f_1)\wdw f_l\wedge(1-f_l)\wedge g_{2l+1}\wdw g_n].$$
The goal of this subsection is to prove the following theorem:
\begin{theorem}
\label{th:about_filtration}
    The group $\L(\k,m)_j$ is equal to $\mc F_{m-j}$.
\end{theorem}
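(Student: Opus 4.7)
My plan is to prove Theorem~\ref{th:about_filtration} by induction on the dimension $p = m - j$ of the varieties appearing in the generators of $\L(\k,m)_j$. The base case $p = 0$ (so $j = m$) is immediate: each generator has the form $[\spec\k, g_1 \wdw g_m]$ and $\mc F_0$ coincides with the entire group $\L(\k, m)_m$ by definition.

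For the inductive step I would take a generator $[Y, f_1 \wdw f_n]$ with $\dim Y = p \ge 1$ and $n = 2p + j$. Lemma~\ref{lemma:tr.deg.zero} immediately reduces to the case when $\mathrm{tr.deg.}(f_1, \dots, f_n) = p$, since otherwise the class vanishes and trivially lies in $\mc F_p$. Pick a transcendence basis among the $f_i$; after resolving indeterminacies this yields a dominant generically finite morphism $Y \to (\P)^p$. Passing to the Galois closure $Y^{\mathrm{gal}}$ of the finite extension $\k(Y) / \k((\P)^p)$ with Galois group $G$, relation (1) transfers the problem to $[Y^{\mathrm{gal}}, a']$ for the pulled-back class $a'$. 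Lemma~\ref{lemma:bir}, applied to each element of $G$ viewed as a birational automorphism of $Y^{\mathrm{gal}}$ over $(\P)^p$, lets me replace $a'$ by its Galois average $\tilde a$ without altering the class. Now Lemma~\ref{lemma:Galois_descent} splits $\tilde a$ as a $\Q$-linear combination of pullback wedges (all entries in $\k((\P)^p)^\times$) and Steinberg-pair wedges $b \wedge (1-b) \wedge \cdots$.

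The Steinberg-pair wedges lie in $\mc F_1$, while the pullback wedges transfer down to $[(\P)^p, \alpha_1 \wdw \alpha_n]$ with $\alpha_i \in \k((\P)^p)^\times$ via relation (1) in reverse. To promote these all the way to $\mc F_p$ I would strengthen the inductive statement to: for every $0 \le l < p$ one has $\L(\k, m)_j \subseteq \mc F_l \Rightarrow \L(\k, m)_j \subseteq \mc F_{l+1}$. The stepping-up replays the Galois-descent argument on the free tail $g_{2l+1} \wdw g_n$ of a generic element of $\mc F_l$; Lemma~\ref{lemma:bir} keeps the already-present Steinberg pairs $f_i \wedge (1 - f_i)$ fixed under the Galois averaging, so the new Steinberg pair produced by Lemma~\ref{lemma:Galois_descent} raises the count to $l+1$, while the pullback part becomes a wedge whose non-Steinberg entries live in a proper subfield.

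The main obstacle is controlling the pullback pieces at each iteration, since they sit on the same-dimensional variety $(\P)^p$ rather than on a strictly smaller one, and progress must be measured not through $\dim Y$ but through the transcendence degree of the subfield generated by the non-Steinberg entries. To drive the iteration to termination I would combine birational invariance on $(\P)^p$ with multilinearity of the wedge: factoring each $\alpha_i$ into irreducibles and applying Lemma~\ref{lemma:bir} through translations and rescalings (the prototype being the argument that forces $[\P^1, c \wedge t] = 0$ for $c \in \k^\times$, by combining the multiplicative identity $t = (t-\mu) \cdot t/(t-\mu)$ with the birational automorphism $t \mapsto t/(t-\mu)$) pushes the non-Steinberg tail into a subfield of transcendence degree strictly less than $p - l$, at which point Lemma~\ref{lemma:tr.deg.zero} forces the residual contribution to vanish. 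Arranging this transcendence-degree drop uniformly across all $p$ iterations, so that each pass provably introduces exactly one new Steinberg pair while lowering the tail's transcendence degree by at least one, is where the genuine technical work of the proof lies.
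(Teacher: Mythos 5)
Your skeleton does match the paper's: both arguments reduce to showing $\mc F_l=\mc F_{l+1}$ for every $l<m-j$, and both do so by passing to a Galois closure, averaging the non-Steinberg tail using Lemma~\ref{lemma:bir}, and splitting the averaged tail via Lemma~\ref{lemma:Galois_descent} into terms that carry a new Steinberg pair $h\wedge(1-h)$ and ``pullback'' terms all of whose tail entries lie in the subfield $L$. (Your outer induction on $p=m-j$ is idle: the inductive hypothesis for smaller $p$ is never invoked, and the paper's argument is uniform in $p$.) The genuine gap is exactly the step you postpone to the last sentence: the treatment of the pullback terms. Your proposed termination mechanism --- use translations, rescalings and multilinearity to push the tail into a subfield of transcendence degree $<p-l$, so that Lemma~\ref{lemma:tr.deg.zero} kills the residue --- cannot work, because these terms are \emph{not} zero in general. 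After the paper's manipulations a typical such term looks like $[Z'\t\P,\ \alpha\wedge t\wedge(t-1)\wedge\cdots]$ with $\alpha$ the Steinberg part pulled back from the base and $t$ the fiber coordinate; it has full transcendence degree, and it lies in $\mc F_{l+1}$ not because it vanishes but because $t\wedge(t-1)=t\wedge(1-t)$ modulo torsion, i.e.\ because it \emph{acquires} a new Steinberg pair. Any correct proof must allow for this outcome, so a strategy whose endgame is ``everything residual dies by transcendence degree'' is structurally wrong, not merely incomplete.

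The paper closes this gap with Lemma~\ref{lemma:about_filtration_2}, whose proof is the real content and which your sketch does not reconstruct. The three devices are: (i) choose the transcendence basis $g_1,\dots,g_s$ of the tail over the Steinberg field $\k(f_1,\dots,f_l)$, so that the pullback term descends along $\psi\cl Y\to Z\t(\P)^s$, where $Z$ is the image of the Steinberg map; this keeps the Steinberg entries as functions on the base and exhibits an honest $\P$-factor with coordinate $t$; (ii) replace the base by an alteration so that every tail function becomes a product of linear functions $t-g'$ with $g'$ on the base, and expand by multilinearity; (iii) induct on the number $r$ of linear factors: for $r=0$ the term dies by Lemma~\ref{lemma:tr.deg.zero}, for $r=1$ it dies by the antisymmetry forced by the inversion $t\mapsto t^{-1}$ (after translating $g_1'$ to $0$), and for $r\geq 2$ translating and rescaling turns two factors into $t$ and $t-1$, producing the Steinberg pair $t\wedge(1-t)$ and hence membership in $\mc F_{l+1}$. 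Your prototype computation $[\P,\,c\wedge t]=0$ is in the spirit of the $r\le 1$ cases, but without (i) and (ii) --- in particular without the alteration that linearizes arbitrary rational functions in one chosen coordinate --- the case analysis cannot even be set up; that is precisely the work you acknowledge as missing, and it is not a routine verification.
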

\begin{lemma}
\label{lemma:about_filtration_2}
Let $Z$ be a proper variety. Let $f_1,\dots, f_l$ be non-zero rational functions on $Z$ and assume that $f_i\ne 1$. Let $g_1,\dots, g_{n-2l}$ be rational functions on $Z\t\P$. We consider $f_i$ as rational functions on $Z\t \P$ via the natural projection $Z\t\P\to Z$. The following element is in $\mc F_{l+1}$:
    $$[Z\t\P, f_1\wedge (1-f_1)\wdw  f_l\wedge (1-f_l)\wedge g_1\wdw g_{n-2l}].$$
\end{lemma}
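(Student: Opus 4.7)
The plan is to reduce the symbol to the case where every $g_i$-factor is either a constant in $t$ or a linear polynomial $t-\alpha$, and then to induct on the number $r$ of such nonconstant factors. Setting $F=\k(Z)$ and letting $t$ denote the coordinate on $\P$, one has $\k(Z\times\P)=F(t)$. By unique factorization each $g_i$ equals a product of an element of $F^\times$ with integer powers of monic irreducible polynomials in $F[t]$. I would choose an alteration $\pi\colon Z'\to Z$ whose residue field extension $\k(Z')/F$ splits every such polynomial into linear factors; by relation (1) the original class equals $(\deg\pi)^{-1}[Z'\times\P,\pi^*(\cdots)]$. Expanding by relation (2) then rewrites the symbol as a sum of reduced symbols in which every $g$-factor is either in $\k(Z')^\times$ or of the form $t-\alpha$ with $\alpha\in\k(Z')$, while leaving the Steinberg block $f_1\wedge(1-f_1)\wdw f_l\wedge(1-f_l)$ untouched.

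Now I would induct on the number $r$ of linear factors in a reduced symbol. When $r=0$ every factor is pulled back from $Z'$, so the transcendence degree of the whole collection is at most $\dim Z'<\dim(Z'\times\P)$ and Lemma \ref{lemma:tr.deg.zero} forces vanishing. When $r=1$, write the single linear factor as $t-\alpha$ and apply the translation $\theta\colon(z,t)\mapsto(z,t+\alpha(z))$, a birational automorphism of $Z'\times\P$; by Lemma \ref{lemma:bir} this turns $t-\alpha$ into $t$ while preserving all other factors. Next apply the birational involution $\iota\colon(z,t)\mapsto(z,1/t)$: it negates the $t$-factor and fixes the constants, so Lemma \ref{lemma:bir} gives $[\,\cdot\,]=-[\,\cdot\,]$, and working with $\Q$-coefficients forces the class to be zero, which is in $\mc F_{l+1}$ trivially.

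For $r\geq 2$, first normalize one linear factor to $t$ by a translation as above; the other linear factors become $t-\gamma_j$ with $\gamma_j\in\k(Z')^\times$ (any coincidence $\gamma_j=0$ makes the wedge repeat and vanish by antisymmetry). Pairing $t$ with one such factor $t-\gamma$ and using $t-\gamma=-\gamma(1-t/\gamma)$ and $t=\gamma\cdot(t/\gamma)$ yields
$$t\wedge(t-\gamma)=t\wedge(-\gamma)+\gamma\wedge(1-t/\gamma)+(t/\gamma)\wedge(1-t/\gamma).$$
The last summand contains the Steinberg pair $(t/\gamma)\wedge(1-(t/\gamma))$, so the corresponding piece of the full symbol already has $l+1$ Steinberg pairs and lies in $\mc F_{l+1}$. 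Each of the other two summands has one fewer linear factor — in the middle one, $1-t/\gamma$ is a unit multiple of $t-\gamma$, so one further application of relation (2) returns it to reduced form with $r-1$ linear factors — and the inductive hypothesis closes the argument.

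The main obstacle is the $r=1$ base case, where Lemma \ref{lemma:tr.deg.zero} does not apply because the transcendence degree is exactly $\dim(Z'\times\P)$; the needed vanishing has to come from the $\iota\colon t\mapsto 1/t$ symmetry together with the $\Q$-coefficients built into the definition of $\L$. A secondary nuisance is the bookkeeping in the inductive step: after each application of multiplicativity one must verify that the new $g$-factors are again in the reduced form (a constant or $t-\alpha$), so that the induction invariant on $r$ is preserved.
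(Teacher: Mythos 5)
Your proposal is correct and follows essentially the same route as the paper: pass to an alteration splitting the $g_i$ into linear factors in $t$, induct on the number $r$ of linear factors, handle $r=0$ by Lemma \ref{lemma:tr.deg.zero}, handle $r=1$ by normalizing the factor to $t$ and using the involution $t\mapsto t^{-1}$ together with $\Q$-coefficients, and for $r\geq 2$ extract a Steinberg pair. The only cosmetic difference is in the last step, where the paper rescales $\wt t = t/g_2'$ to reduce to $t\wedge(t-1)$ and calls the conclusion obvious, while you make the same mechanism explicit via the identity $t\wedge(t-\gamma)=t\wedge(-\gamma)+\gamma\wedge(1-t/\gamma)+(t/\gamma)\wedge(1-t/\gamma)$.
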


\begin{proof}
Denote this element by $\xi$. Let $\ph\cl Z'\to Z$ be some alteration. Denote by $\ph'=(\ph, id)$ the corresponding map $Z'\t\P\to Z\t\P$. We can choose $\ph$ such that the rational functions $(\ph')^*(g_i)$ are products of linear functions in $t$ with coefficients in rational functions on $Z'$. So we can assume that $\xi$ is a linear combination of elements of the following form:
$$\xi=[Z'\t\P, f_1\wedge (1-f_1)\wdw f_l\wedge (1-f_l)\wedge (t-g_1')\wdw (t-g_r')\wedge g_{r+1}'\wdw g_{n-2l}'].$$
   In this formula $g_i'$ are some non-zero rational functions on $Z'$. The case $r=0$ follows from lemma \ref{lemma:tr.deg.zero}. Let us assume that $r\geq 1$. Making the change of variable $\wt t=t-g_1'$ we can assume that $g_1'=0$. The proof is by induction on $r$. 
   Let $r=1$. Denote
   $$\alpha=f_1\wedge (1-f_1)\wdw f_l\wedge (1-f_l)\wedge t\wedge g_2'\wdw g_{n-2l}'.$$
Let $\theta$ be an automorphism of $Z'\t\P$ given by the formula $(z,t)\mapsto (z,t^{-1})$. We get
   $$[Z'\t\P,\alpha]=[Z'\t\P,\theta^*(\alpha)]=-[Z'\t\P,\alpha].$$
   And so $\xi=[Z'\t\P,\alpha]=0$.
    Assume that $r>1$. Making the change of variable $\wt t=t/g_2'$, we can assume that $g_2'=1$. In this case the statement is obvious.
\end{proof}

\begin{proof}[The proof of Theorem \ref{th:about_filtration}] Assume that $l<m-j$. We need to check that $\mc F_l=\mc F_{l+1}$. As $m-j=p$ we get $l<p$.
 The vector space $\mc F_l$ is generated by the elements of the following form
    $$\xi=[Y,f_1\wedge (1-f_1)\wdw f_l\wedge (1-f_l)\wedge g_1\wdw g_{n-2l}].$$ 
    By Lemma \ref{lemma:tr.deg.zero}, we can assume that the extension $\k(f_1,\dots, f_l,g_1,\dots, g_{n-2l})\subset \k(Y)$ is finite. Let $s=p - tr.deg.(\k(f_1,\dots, f_l)).$ As $l<p$ this number is positive. Reordering $g_i$, we can assume that $(g_1,\dots, g_s)$ is a transcendence basis of $\k(X)$ over $\k(f_1,\dots, f_l)$. 
    We need to check that $\xi\in\mc F_{l+1}$. Set 
    $$\alpha = f_1\wedge (1-f_1)\wdw f_l\wedge (1-f_l),\quad \beta= g_1\wdw g_{s}.$$

Let $L=\k(f_1,\dots, f_l,g_1,\dots, g_{s})$. We can assume that the extension $L\subset \k(Y)$ is Galois. Otherwise, there is an alteration $\ph\colon Y'\to Y$ such that the composition $L\subset\k(Y)\subset\k(Y')$ is Galois and it is enough to prove the statement for $Y'$.  Denote by $\gamma$ the averaging of the element $g_{s+1}\wdw g_{n-2l}$ under the Galois group of $\k(Y)/L$. By Lemma \ref{lemma:bir} we get
$$\xi=[X,\alpha\wedge \beta\wedge \gamma].$$ 
Set $s'=n-2l-s$. We have $s'\geq 0$. We apply Lemma \ref{lemma:Galois_descent} to $\gamma$. We get that  $\xi$ is a linear combination of the elements of the following form:
$$[Y,\alpha\wedge \beta\wedge h\wedge (1-h)\wedge \gamma'], h\in\k(Y)^\t, h\ne 1, \gamma'\in\Lambda^{(s'-2)}(\k(Y)^\t).$$
$$[Y, \alpha\wedge \beta\wedge h_1'\wdw h_{s'}'], \quad h_i'\in L.$$
The first element obviously belongs to $\mc F_{l+1}$. Let us show that the second element also belongs to $\mc F_{l+1}$. Denote this element by $\eta$. Consider the maps $\psi_1'\cl Y\to (\P)^l, \psi_2\cl Y\to (\P)^s$ given by the formulas $$\psi'_1(z)=(f_1(z),\dots, f_l(z)), \quad\psi_2(z)=(g_1(z),\dots, g_s(z)).$$ We can assume that these two maps are regular. Denote by $Z$ the image of $\psi'_1$ and let $\psi_1$ be the natural map $Y\to Z$. Let $\psi=(\psi_1, \psi_2)\cl Y\to Z\t(\P)^s$. Denote by $y_i, t_i$ the coordinates on $(\P)^l$ and $(\P)^s$. Let $\overline y_i$ be the restriction of $y_i$ to $Z$ and let $\alpha'=\ol y_1\wedge (1-\ol y_1)\wdw \ol y_l\wedge (1-\ol y_l)$. For some rational functions $h_1'',\dots, h_{s'}''$ we get:
$$\eta=\dfrac 1{\deg\psi}[Z\t(\P)^s, \alpha'\wedge t_1\wdw t_s\wedge h_1''\wdw h_{s'}''].$$
This element belongs to $\mc F_{l+1}$ by Lemma \ref{lemma:about_filtration_2}.
\end{proof}

\subsection{The proof of Theorem \ref{th:BS_Lambda}}
 By the previous theorem we know that $\L(\k,m)_j$ is equal to $\mc F_{m-j}=\mc F_p$. As $j\leq 0$ and $j=n-2p$, we get $n\leq 2p$. So $p\geq n/2$. If $p>n/2$ then $\L(\k,m)_j=\mc F_p=0$. So we can assume that $p=n/2$. Let us show that $\mc F_p=0$. Denote
$$\xi = [Y; f_1\wedge (1-f_1)\wdw f_p\wedge (1-f_p)].$$
    If the transcendence degree of the field $\k(f_1,\dots, f_p)$ is strictly smaller than $p$ we get zero by Lemma \ref{lemma:tr.deg.zero}. So the extension $\k(f_1,\dots, f_p)\subset \k(X)$ is finite. We can assume that $f_i$ are regular. Denote by $\psi \colon Y\to (\P)^p$ a map given by the formula $z\mapsto (f_1(z),\dots, f_{p}(z))$. We get $$\xi=\dfrac{1}{\deg\psi}[(\P)^p, t_1\wedge (1-t_1)\wdw t_{p}\wedge (1-t_p)].$$

    In this formula $t_i$ are canonical coordinates on $(\P)^p$. Let us show that this element is equal to zero. Denote $\alpha = t_1\wedge (1-t_1)\wdw t_{p}\wedge (1-t_p)$. Let $\theta$ be an automorphism of $(\P)^p$ given by the formula $\theta(t_1,\dots, t_p)=(1-t_1,t_2,\dots, t_p)$. We get 

    $$[(\P)^p, \alpha]=[(\P)^p, \theta^*(\alpha)]=[(\P)^p, -\alpha]=-[(\P)^p, \alpha].$$
    So $[(\P)^p,\alpha]=0$.
The theorem \ref{th:BS_Lambda} is proved.

\begin{proof}[The proof of Theorem \ref{th:generators_of_Bloch_group}]It follows from Theorem \ref{th:about_filtration} that the group $\widehat B_m(\k)$ is generated by the element of the following form:
$$[Y,f_1\wedge (1-f_1)\wdw f_{m-1}\wedge (1-f_{m-1})\wedge g].$$
Denote this element by $\xi$. Consider the map $\wt\psi_1\cl Y\to (\P)^{m-1}$ given by the formula $y\mapsto (f_1(y),\dots, f_{m-1}(y))$. We can assume that $\wt\psi_1$ is regular. Denote the image of $\wt\psi_1$ by $Z$. Let $\psi_1$ be the natural map $Y\to Z$. Consider several cases:

\begin{enumerate}
  \item     Let us assume that $\dim Z=\dim Y$. In this case the map $\psi_1$ is an alteration. We can assume that the element $g$ lies in  $\k(f_1,\dots, f_{m-1})$. We get:
    $$\xi=\dfrac 1{\deg \psi_1}[(\P)^{m-1}, x_1\wedge (1-x_1)\wdw x_{m-1}\wedge (1-x_{m-1})\wedge Q(x_1,\dots, x_{m-1})].$$
    \item Let us assume that $\dim Z<\dim Y$. Consider the map $\psi_2\cl Y\to\P$ given by the formula $y\mapsto g(y)$. We can assume that $\psi_2$ is regular. Denote by $\psi\cl Y\to Z\t\P$ a map given by the formula $y\mapsto (\psi_1(y), \psi_2(y))$. If the map $\psi$ were not dominant than the transcendence degree of the extension $\k\subset\k(f_1,\dots, f_{m-1},g)$ would be strictly smaller than $m-1$. In this case Lemma \ref{lemma:tr.deg.zero} would imply that $\xi=0$. So we can assume that the map $\psi$ is an alteration. We get
    $$\xi=\dfrac 1{\deg\psi}[Z\t\P,\wt f_1\wedge (1-\wt f_1)\wdw \wt f_{m-1}\wedge (1-\wt f_{m-1})\wedge t].$$
    In this formula $\wt f_i$ are some functions on $Z$ and $t$ is the canonical coordinate on $\P$. By Lemma \ref{lemma:about_filtration_2} this element lies in $\mc F_{m}$ and so is equal to zero. 
\end{enumerate}
\end{proof}
\begin{corollary}
\label{cor:constant_zero}
    If one of the functions $f_i$ is constant, then
    $$[X, f_1\wdw f_{2m-1}]=0.$$
\end{corollary}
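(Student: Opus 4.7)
The plan is as follows. By multilinearity and antisymmetry of the wedge I may assume the constant factor sits in the last slot, reducing to showing $\xi := [X, f_1\wdw f_{2m-2}\wedge c]=0$ with $c\in\k^\times$. The first step is to replay the inductive reduction $\mc F_l = \mc F_{l+1}$ from the proof of Theorem \ref{th:about_filtration} while tracking $c$. Because $c$ is algebraic over $\k$, it can never appear in a transcendence basis of $\k(Y)$ over $\k(f_1,\ldots, f_l)$, and therefore always lies in the ``non-basis'' part that gets Galois-averaged. Since the Galois group fixes $\k$ pointwise, $c$ commutes with the averaging, and Lemma \ref{lemma:Galois_descent} applied to the remaining part retains $c$ as a tensor factor. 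Iterating through all $m-1$ reduction steps yields an identity
\begin{equation*}
    \xi = \sum_\alpha n_\alpha [Y_\alpha, h_{\alpha,1}\wedge(1-h_{\alpha,1})\wdw h_{\alpha,m-1}\wedge(1-h_{\alpha,m-1})\wedge c], \qquad n_\alpha\in\Q.
\end{equation*}

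Next, I would apply the case analysis from the proof of Theorem \ref{th:generators_of_Bloch_group} to each summand with the ``extra'' factor $g$ set to $c$. If the image $Z_\alpha$ of $(h_{\alpha,1},\ldots, h_{\alpha,m-1})\cl Y_\alpha\to(\P)^{m-1}$ has dimension strictly less than $\dim Y_\alpha=m-1$, then $\text{tr.deg.}_\k(h_{\alpha,1},\ldots, h_{\alpha,m-1}, c)<m-1$, and Lemma \ref{lemma:tr.deg.zero} forces the summand to vanish. Otherwise, the alteration relation rewrites the summand as $\frac{1}{\deg \psi_\alpha}\gamma_c$, where
\begin{equation*}
    \gamma_c := [(\P)^{m-1}, x_1\wedge(1-x_1)\wdw x_{m-1}\wedge(1-x_{m-1})\wedge c].
\end{equation*}
It remains to show $\gamma_c=0$, for which I use the birational automorphism $\theta$ of $(\P)^{m-1}$ defined by $\theta(x_1,\ldots, x_{m-1})=(1-x_1, x_2,\ldots, x_{m-1})$. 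Its pullback exchanges $x_1$ with $1-x_1$ and fixes all other factors including $c$, so $\theta^*(\omega\wedge c)=-\omega\wedge c$ where $\omega$ is the full Steinberg product. Lemma \ref{lemma:bir} then yields $\gamma_c=-\gamma_c$; since we work in a $\Q$-vector space, $\gamma_c=0$.

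The main obstacle is the careful bookkeeping in the first step: one must verify that the factor $c$ really survives as a tensor factor at each intermediate stage of the reduction from $\mc F_0$ to $\mc F_{m-1}$. This is ultimately a straightforward consequence of the Galois-invariance of $c$ and of the fact that Lemma \ref{lemma:Galois_descent} can be applied to the shorter Galois-invariant wedge that excludes the already-invariant factor $c$, but it requires stepping carefully through each inductive step of the proof of Theorem \ref{th:about_filtration}.
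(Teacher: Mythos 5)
Your proof is correct, but it takes a genuinely different --- and much heavier --- route than the paper. The paper's own proof is three lines: wedging with a constant defines a map $mult\colon \L(\k,m-1)_0\otimes\k^\t\to\L(\k,m)_1$, $[Y,f_1\wdw f_{2m-2}]\otimes\lambda\mapsto[Y,f_1\wdw f_{2m-2}\wedge\lambda]$, which visibly respects the alteration and additivity relations; since $\L(\k,m-1)_0=0$ by Theorem \ref{th:BS_Lambda}, the map $mult$ is zero and the corollary follows. In other words, instead of carrying the constant $c$ through the entire filtration machinery, one can simply delete it: the truncated symbol lives in the degree-zero part of the weight-$(m-1)$ complex, which is already known to vanish. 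Your step 1 amounts to re-proving a ``$c$-tracked'' refinement of Theorem \ref{th:about_filtration} and of Lemma \ref{lemma:about_filtration_2}; this tracking does go through --- $c$ never enters a transcendence basis, the Galois averaging fixes $c$ so Lemma \ref{lemma:Galois_descent} applies to the shorter invariant wedge, and in Lemma \ref{lemma:about_filtration_2} the constant factor is untouched by the changes of variable and by the induction on $r$ --- so there is no gap, only bookkeeping that you assert rather than verify, and which the paper's reduction makes entirely unnecessary. Your steps 2--3 then reproduce, with $c$ as a spectator factor, the case analysis of Theorem \ref{th:generators_of_Bloch_group} and the involution trick from the proof of Theorem \ref{th:BS_Lambda}, and they are sound; note only that in your nondegenerate case the alteration relation gives the summand as $\deg\psi_\alpha\cdot\gamma_c$ rather than $\frac{1}{\deg\psi_\alpha}\gamma_c$ (immaterial, since you only need $\gamma_c=0$), and that both arguments implicitly assume $m\geq 2$, as does the corollary itself. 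The trade-off: your argument is self-contained within weight $m$, while the paper's observation that one may pass to weight $m-1$ is far shorter and exhibits the corollary as a direct conceptual consequence of the Beilinson--Soule type vanishing.
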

\begin{proof}
Define a map $mult\colon \L(\k, m-1)_{0}\t \k^\t\to \L(\k, m)_{1}$ given by the formula
$$[Y, f_1,\dots, f_{2m-2}]\otimes \lambda \mapsto [Y, f_1,\dots, f_{2m-2}, \lambda].$$
    By Theorem \ref{th:BS_Lambda}, the group $\L(\k, m-1)_{0}$ is zero and so the map $mult$ is zero. This implies the statement of the corollary.
\end{proof}

\section{Functional equations for classical polylogarithms}

\subsection{The group $\mc R_m(F)$}
Denote by $\Q[\P(F)]_m$ a free vector space generated by the symbols $\{z\}_m, z\in \P(F)$. Let $(F,\nu)$ be a discrete valuation field. Denote by $\ol F_\nu$ the residue field. For an element $a\in F$ with $\nu(a)=0$ we denote by $\ol a$ the corresponding element in $\ol F_\nu$. Define a map $res_\nu\cl \Q[\P(F)]_m\to \Q[\P(\overline F_\nu)]_m$ as follows. Let $\{h\}_m\in \Q[\P(F)]_m$. If $\nu(h)=0$, then $res_\nu(\{h\}_m)=\{\overline h\}_m$. Otherwise, $res_\nu(\{h\}_m)=0$. Let $Y$ be a curve. For a point $y\in Y$ denote by $\nu_y$ a discrete valuation corresponding to $y$. We write $res_y$ instead of $res_{\nu_y}$. We set $F^\t_\Q=F^\t\otimes_\Z\Q$.
 
 Let us give the definition of the group $\mc R_m(F)$. The definition is by induction on $m$. Let $m=2$. Define a map $\wt \delta_{2,F}\cl \Q[\P(F)]_2\to \L^2 F^\t_\Q$ by the following formula $$
\wt\delta_{2,F}(\{a\}_2)=
\begin{cases}
0 &\text{if } a=0,1,\infty\\
a\wedge (1-a) &\text{otherwise}.
\end{cases}
$$
Let $L=F(t)$. For any $\alpha\in \ker\delta_{2,L}$ we have the   element $res_{0}(\alpha)-res_{\infty}(\alpha)$. This element belongs to the group $ \Q[\P(F)]_2$. The group $\mc R_2(F)$ is generated by $\{0\}_m$ and $\{\infty\}_m$ and by the elements of the form $res_{0}(\alpha)-res_{\infty}(\alpha)$, where $\alpha\in\ker\wt\delta_{2,L}$.

Let $m>2$. We set $B_{m-1}(F)=\Q[\P(F)]_{m-1}/\mc R_{m-1}(F)$. Define a map  $\wt\delta_{m,F}\cl  \Q[\P(F)]_m\to B_{m-1}(F)\otimes_\Z F^\t_\Q$ by the following formula $$
\wt\delta_{m,F}(\{a\}_m)=
\begin{cases}
0 &\text{if } a=0,\infty\\
\{a\}_{m-1}\otimes a &\text{otherwise}.
\end{cases}
$$
The group $\mc R_m(F)$ is generated by $\{0\}_m, \{\infty\}_m$ and by the elements of the form $res_{0}(\alpha)-res_{\infty}(\alpha)$, where $\alpha\in\ker\wt\delta_{m,L}$.

\subsection{The proof of Theorem \ref{th:functional_equations_classical_polylogarithms}}

To prove Theorem \ref{th:functional_equations_classical_polylogarithms} we need several lemmas. Let $X$ be an algebraic variety. Denote by $\L(X,m)'$ the subgroup of $\L(X,m)$ generated by the triples $[Y,a,\ph]$, where $\ph$ is surjective. Denote by $s$ the natural map $\L(X,m)\to\L(\k(X),m)$ corresponding to the restriction to the generic point. We proved in \cite[3.3]{bolbachan2023chow} that the complex $\L(X,m)$ satisfies the localisation sequence. The following lemma is an easy consequence of this fact.

\begin{lemma}
    \label{lemma:restriction_to_generic_point}
        The restriction of $s$ to  $\L(X,m)'$ induces an isomorphism $\L(X,m)'\to \L(\k(X),m)$.
    \end{lemma}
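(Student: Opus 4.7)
The plan is to exhibit a direct-sum decomposition $\L(X,m) = \L(X,m)' \oplus \L(X,m)''$, where $\L(X,m)''$ is the subgroup generated by triples $[Y,a,\psi]$ with $\psi$ \emph{not} surjective, and then identify $\ker(s)$ with $\L(X,m)''$ via the localisation sequence from \cite[3.3]{bolbachan2023chow}. Injectivity of $s$ on $\L(X,m)'$ will then follow since $\L(X,m)' \cap \L(X,m)'' = 0$, and surjectivity will come from a spreading-out argument.

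First I would establish the direct-sum decomposition. The generators of $\L(X,m)$ split into two disjoint classes according to whether $\psi$ is surjective. Both defining relations respect this dichotomy: an alteration $\ph\cl Y_1 \to Y_2$ is proper and generically finite, hence dominant and, by properness, surjective, so $\psi \circ \ph$ is surjective iff $\psi$ is; additivity obviously preserves surjectivity. Hence the free $\Q$-vector space on generators decomposes as $\L(X,m)' \oplus \L(X,m)''$, and this decomposition passes to the quotient.

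Second, I would invoke the localisation sequence, which asserts that for any open $U \subset X$ with closed complement $Z$, the sequence $\L(Z,m) \xrightarrow{i_*} \L(X,m) \xrightarrow{j^*} \L(U,m) \to 0$ is exact. Since $s$ is the colimit of the $j^*$ as $U$ shrinks to the generic point, $\L(\k(X),m) = \colim_U \L(U,m)$, and the image of $i_*$ for varying $Z$ consists precisely of triples with $\psi(Y) \subsetneq X$, i.e., it equals $\L(X,m)''$. Taking the colimit identifies $\ker(s)$ with $\L(X,m)''$, which combined with the decomposition proves injectivity of $s|_{\L(X,m)'}$. For surjectivity, given $[Y_0,a_0] \in \L(\k(X),m)$ I would spread $Y_0$ out to a proper family $Y \to U$ for some open $U \subseteq X$, then apply Nagata compactification to extend to a proper $\bar Y \to X$; the image is closed and contains $U$, hence equals $X$, so $\bar Y \to X$ is surjective, and $a_0$ extends uniquely along the generic isomorphism to an element of $\L^n \k(\bar Y)^\t$, producing a preimage in $\L(X,m)'$.

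The main obstacle will be the careful identification $\ker(s) = \L(X,m)''$ via the localisation sequence — in particular, checking that exactness is preserved under the colimit over $U$, and that every non-surjective triple indeed arises from the image of some $i_*$ (which reduces to factoring $\psi$ through its image $\overline{\psi(Y)} \subsetneq X$). The spreading-out step is standard but requires verifying that properness is preserved by the compactification; everything else is routine bookkeeping.
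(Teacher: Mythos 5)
Your proposal is correct and takes essentially the same route as the paper: your direct-sum decomposition $\L(X,m)=\L(X,m)'\oplus\L(X,m)''$ is just the global form of the paper's retraction $r$ onto $\L(X,m)_Z$ (both rest on the observation that alterations are surjective, so the defining relations respect the surjective/non-surjective dichotomy), and your identification $\ker(s)=\L(X,m)''$ uses the same localisation theorem of \cite{bolbachan2023chow} together with the colimit description of $\L(\k(X),m)$ over shrinking opens $U\subset X$. Your spreading-out/Nagata argument merely fills in the surjectivity step, which the paper dismisses as easy to see.
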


    \begin{proof}
    Denote the map stated in the lemma by $s'$. It is easy to see that $s'$ is surjective.
    Let $U$ be some non-empty open subset of $X$.  Denote by $s_U$ the natural map $\L(X,m)\to \L(U,m)$ and by $s_U'$ the restriction of this map to $\L(X,m)'$. To show that the map $s'$ is injective in is enough to show that for any non-empty $U\subset X$ the map $s_U'$ is injective. 
        
Let $Z=X\bs U$. Let $\L(X,m)_Z$ be the subgroup of $\L(X,m)$ generated by $[Y,a,\psi]$ such that the image of $\psi$ is contained in $Z$. Define a map $r\cl \L(X,m)\to \L(X,m)_Z$ as follows. Let $[Y,a,\psi]\in \L(X,m)$. If the image of $\psi$ is contained in $Z$ then $r([Y,a,\psi])$ is equal to $[Y,a,\psi]$. Otherwise $r([Y,a,\psi])$ is zero. It is easy to see that this map is well defined and that $r$ is left inverse to the natural inclusion $\L(X,m)_Z\to \L(X,m)$.

Let $\xi\in \L(X,m)_Z\cap \L(X,m)'$. As $\xi\in \L(X,m)_Z$ we have $r(\xi)=\xi$. On the other hand as $\xi\in \L(X,m)'$ we get $r(\xi)=0$. This implies that $\L(X,m)_Z\cap \L(X,m)'=\varnothing$.

It follows from \cite[Theorem 3.7.]{bolbachan2023chow} that the kernel of the map $s_U$ is equal to $\L(X,m)_Z$. This implies that the map $s_U'$ is injective.
    \end{proof}
    
     Let $g\cl X_1\to X_2$ be a proper morphism. Set $s=\dim X_2-\dim X_1$. Define a morphism of complexes $\psi_*\cl \L(X_1,m)_*\to \L(X_2, m+s)_{*+2s}$ by the formula $g_*([Y,a,\psi])=[Y,a,g\circ\psi]$. The fact that $g_*$ is a morphism of complexes is clear. Let $X$ be $\P$ and denote by $\pi_X$ the structural map. For a point $x\in X$ denote by $i_x$ the embedding of the point $x$ into $X$. We have $(\pi_X)_*\circ (i_x)_*=id$. Denote by $t$ the canonical coordinate on $X$.

\begin{lemma}
\label{lemma:differential_of_polylogarithms}
Let $m\geq 3$. The following statements are true:
    \begin{enumerate}
        \item Let $a\in \k^\t$. Then $$d([(\P)^{m-1},\omega_m(a)])=[(\P)^{m-2}, \omega_{m-1}(a)\wedge a].$$
        \item Let $h\in\k(X)^\t$. Denote by $\psi_k$ the natural projection $X\t(\P)^{k}\to X$. We have the following equality in the group $L(X, m+1)_3$: 
        \begin{align*}
            &d([X\t(\P)^{m-1},\omega_m(X,h)\wedge t,\psi_{m-1}])=[X\t(\P)^{m-2},\omega_{m-1}(X,h)\wedge h\wedge t,\psi_{m-2}]+\\&(i_\infty)_*\mc T_{m}(h(\infty))-(i_0)_*\mc T_{m}(h(0)).
        \end{align*}
    \end{enumerate}
\end{lemma}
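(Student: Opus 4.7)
The plan is to prove both equalities by direct evaluation of $d$ as the sum $\sum_{D}[D,\ts_D a,\psi\circ i_D]$ over the irreducible divisors of the snc union of zeros and poles, using three systematic vanishing principles throughout: (i) $f\wedge f=0$; (ii) $1\wedge\alpha=0$, since $1$ is the neutral element of $\k(Y)^\t$ (and hence $0$ in additive notation); (iii) Corollary \ref{cor:constant_zero}, which kills any generator $[Y,\alpha]$ whose wedge $\alpha$ contains a constant factor from $\k^\t$.

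For (1), the relevant irreducible divisors on $(\P)^{m-1}$ are $\{x_i=0\}$ and $\{x_i=\infty\}$ for $1\le i\le m-1$, $\{x_i=x_{i+1}\}$ for $1\le i\le m-2$, $\{x_1=1\}$, and $\{x_{m-1}=a\}$. On each one except the last, I expect $\ts_D\omega_m(a)$ to vanish by a combination of (i)--(iii): on $\{x_i=0\}$ and $\{x_i=x_{i+1}\}$ one gets a repeated coordinate after restricting neighboring factors (possibly after first extracting a constant such as the $-1$ in $-x_{i+1}=(x_i-x_{i+1})|_{x_i=0}$); on $\{x_1=1\}$ the coordinate $x_1$ restricts to $1$; on $\{x_i=\infty\}$ I would rewrite the three adjacent factors $(x_{i-1}-x_i),x_i,(x_i-x_{i+1})$ in the form $\pm x_i(1-x_j/x_i)$, expand additively, discard all $x_i\wedge x_i$ terms, whereupon every surviving residue contains either the constant $\pm 1$ or a unit $(1-x_j/x_i)$ whose restriction to $x_i=\infty$ equals $1$. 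The unique nonzero contribution is at $\{x_{m-1}=a\}\cong(\P)^{m-2}$: bringing $(x_{m-1}-a)$ from position $2m-1$ to the front gives sign $(-1)^{2m-2}=+1$, and restricting the remaining $2m-2$ factors yields exactly $\omega_{m-1}(a)\wedge a$.

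For (2), I would split the irreducible divisors of $X\t(\P)^{m-1}$ into horizontal components (the analogues of those from Part (1) with $a$ replaced by $h(t)$, together with the graph-like divisor $\{x_{m-1}=h\}$) and vertical ones $\{t=t_0\}$. The horizontal analysis runs identically to Part (1) since $t$ has valuation $0$ on every horizontal divisor and the trailing factor $\wedge t$ is simply carried through; the only horizontal contribution arises from $\{x_{m-1}=h\}\cong X\t(\P)^{m-2}$, which yields $\omega_{m-1}(X,h)\wedge h\wedge t$ with composed structural map $\psi_{m-1}\circ i_{\{x_{m-1}=h\}}=\psi_{m-2}$. For a vertical $\{t=t_0\}$ with $t_0\notin\{0,\infty\}$, the only factor with possibly nonzero valuation is $(x_{m-1}-h)$ when $h$ has a pole at $t_0$; writing it additively as $-k(t-t_0)+u$ with $u$ a unit whose restriction to $\{t=t_0\}$ is a nonzero constant, every surviving residue either is zero outright or contains the constant $t_0$ and vanishes by (iii). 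Finally, at $\{t=0\}$ and $\{t=\infty\}$ the coordinate $t$ at position $2m$ contributes sign $(-1)^{2m-1}=-1$ and valuation $\pm 1$; after case analysis when $h(0)$ or $h(\infty)\in\{0,\infty\}$ (using $\omega_m(0)=\omega_m(\infty)=0$, plus the same additive rewriting of $(x_{m-1}-h)$), the residues collapse to $-\omega_m(h(0))$ and $+\omega_m(h(\infty))$, giving the terms $-(i_0)_*\mc T_m(h(0))$ and $(i_\infty)_*\mc T_m(h(\infty))$.

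The main obstacle is bookkeeping rather than depth: many divisors to enumerate, three sign conventions (wedge-position parity, valuation sign, and the index shift on pushforward $g_*\cl\L(X_1,m)_*\to\L(X_2,m+s)_{*+2s}$) to track consistently, and the additive rewritings needed at divisors of infinity and at the poles of $h$ generate several subcases, each of which must be reduced via (i)--(iii). No single step is hard; organizing the case analysis cleanly is the actual work.
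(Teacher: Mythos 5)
Your residue computations along the divisors of $(\P)^{m-1}$ and of $X\t(\P)^{m-1}$ are essentially correct, including the signs, and they agree with what the paper obtains for those divisors. But the proposal has a genuine gap at its very first step: the differential $d$ is \emph{not} defined as the sum of residues over the divisors of the given model. By the definition of the complex $\L(X,m)$, one must first replace $[Y,a,\psi]$ by $[\wt Y,\ph^*(a),\psi\circ\ph]$ for a resolution $\ph\cl \wt Y\to Y$ on which the union of zeros and poles of the wedge factors \emph{is} snc, and only then sum over all irreducible divisors of $\wt Y$. For $\omega_m(a)$ this union is not snc on $(\P)^{m-1}$: already for $m=3$ the three divisors $\{x_1=0\}$, $\{x_2=0\}$, $\{x_1=x_2\}$ meet at the point $(0,0)$, and $\{x_1=\infty\}$, $\{x_2=\infty\}$, $\{x_1=x_2\}$ meet at $(\infty,\infty)$; in part (2) the intersections of $\{x_{m-1}=h(t)\}$ with the vertical fibers over the zeros and poles of $h$ are worse still. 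So a blow-up is unavoidable, and the sum defining $d$ acquires contributions from the exceptional (contracted) divisors, which your enumeration never sees. What your principles (i)--(iii) prove, applied to the strict divisors only, is that $\sum_{D\subset Y}[D,\ts_D(a)]$ equals the asserted right-hand side; they do not prove that this sum equals $d([Y,a])$.

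Showing that the contracted divisors contribute zero is the actual content of the paper's proof, and it is not mere bookkeeping. The paper uses Lemma~\ref{lemma:contracted_lies_on_two_divisors} (a contracted divisor with nonzero contribution must map into the intersection of the supports of two of the functions; this rests on Lemma~\ref{lemma:tr.deg.zero}) and Lemma~\ref{lemma:xi:ijk} (if some monomial $f_{i_1}f_{i_2}^{n_2}\dots f_{i_k}^{n_k}$ pulls back to a constant on $D$, then the contribution of $D$ vanishes; this rests on Corollary~\ref{cor:constant_zero} and hence on Theorem~\ref{th:BS_Lambda}), together with a case analysis producing, for each possible image $H=\ph(D)$, a monomial in the wedge factors that is constant on $D$. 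In part (2) there is the further ingredient you omit: contracted divisors whose image dominates $X$ are handled by Lemma~\ref{lemma:restriction_to_generic_point}, i.e.\ by restricting to the generic point of $X$ and invoking part (1) over $\k(t)$, while those mapping to a point $w\in X$ require the case analysis on $h(0)$ that you carry out only for the strict vertical fibers. A cosmetic difference: the paper first passes to the coordinates $y_1=x_1$, $y_i=x_i/x_{i-1}$, turning $\omega_m(a)$ into $(1-y_1)\wedge y_1\wdw (1-y_{m-1})\wedge y_{m-1}\wedge(y_1\cdots y_{m-1}-a)$, which makes the contracted-divisor analysis tractable; staying in the $x$-coordinates is possible but does not remove the need for it. To repair your proof you would need to add the entire contracted-divisor argument, which is where Theorem~\ref{th:BS_Lambda} enters essentially.
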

This lemma will be proved in the next subsection.

\begin{proof}[The proof of Theorem \ref{th:functional_equations_classical_polylogarithms}]
    Let us prove by induction on $m$ that the statement of the theorem holds for all fields of characteristic zero. The case $m=2$ follows from \cite[Lemma 6.2]{bol_2024}. So we can assume that $m\geq 3$. 
Let
    $$\alpha=\sum c_{\alpha}\{h_\alpha(t)\}_m\in \ker\wt\delta_{m,\k(t)}.$$
We recall that $X=\P$. Consider the following element:
$$\xi=\sum c_\alpha [X\t(\P)^{m-1},\omega_{m-1}(h_{\alpha}(t))\wedge t, \psi_{m-1}].$$
By Lemma \ref{lemma:differential_of_polylogarithms} we get
$$d((\pi_X)_*(\xi))=(\pi_X)_*(d(\xi))=(\pi_X)_*(\eta)+\sum\limits_{\alpha}c_{\alpha}(\mc T_{m}(h_\alpha(\infty))-\mc T_{m}(h_\alpha(0))),$$
where $$\eta=\sum\limits_{\alpha}c_{\alpha}[X\t(\P)^{m-2},\omega_{m-1}(X,h_{\alpha(t)})\wedge h_{\alpha(t)}\wedge t,\psi_{m-2}].$$
By Theorem \ref{th:BS_Lambda}, we get

$$\sum\limits_{\alpha}c_\alpha\mc T_{m}(h_\alpha(0))-\sum\limits_{\alpha}c_\alpha\mc T_{m}(h_\alpha(\infty))=(\pi_X)_*(\eta).$$
To prove the statement of the theorem, it is enough to check  that $\eta=0$. To do this, by Lemma \ref{lemma:restriction_to_generic_point}, it is enough to check that $s(\beta)=0$. Denote by $\mb P_{\k(t)}^1$ the projective line over the field $\k(t)$.  We get
$$s(\beta)=\sum\limits_{\alpha}c_{\alpha}[(\mb P_{\k(t)}^1)^{m-2},\omega_{m-1}(h_{\alpha(t)})\wedge h_{\alpha(t)}\wedge t].$$
Define a map $\wt{\mc T}_{m-1}^{(2)}\cl B_{m-1}(\k(t)\otimes \L^2(\k(t)^\t_\Q)\to \L(\k(t),m+1)_3$ by the following formula
$$\wt{\mc T}_{m-1}^{(2)}(\{a\}_{m-1}\otimes b\wedge c)=\begin{cases}
0 &\text{if } a=0,\infty\\
 [(\mb P_{\k(t)}^1)^{m-2},\omega_{m-1}(a)\wedge b\wedge c] &\text{otherwise}.
\end{cases}$$
    This map is well-defined by the inductive assumption for the field $\k(t)$.
We get
$$s(\beta)=\mc P_{m-1}^{(2)}(\wt\delta_m(\xi)\wedge t)=0.$$
In the last formula we have used the fact that $\alpha\in \ker \wt\delta_m$.

\end{proof}



\begin{proof}[The proof of Corollary \ref{cor:KZ_conjecture}]
We need to check that the relations $(*)$ and $(***)$  satisfy the statement of Kontcevich-Zagier conjecture.
    For $(*)$ this is clear. For $(***)$ this follows the analysis of the proofs given in \cite{Levin_A,goncharov2005polylogarithms}.
\end{proof}

\subsection{The proof of Lemma \ref{lemma:differential_of_polylogarithms}}
\begin{lemma}
\label{lemma:contracted_lies_on_two_divisors}
Let $Y$ be a smooth variety, $\psi\cl \wt Y\to Y$ an alteration and $D\subset \wt Y$ a divisor contracted under $\ph$. Let $f_1,\dots, f_n$ are non-zero rational functions on $Y$. Set $H=\psi(D)$ and $\alpha=f_1\wdw f_n$. Assume that $[D,\psi^*(\alpha)]\ne 0$. Then there are $i\ne j$ such that $H$ lies on both divisors $\supp((f_i))$ and $\supp((f_j))$.
\end{lemma}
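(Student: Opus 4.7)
The plan is to prove the contrapositive: assuming $H$ is contained in $\supp((f_i))$ for at most one index $i$, I would show that $[D,\psi^*(\alpha)]=0$. The natural element attached to $D$ is the tame symbol $\ts_D(\psi^*(\alpha))\in\L^{n-1}\k(D)^\t$ provided by the proposition characterizing $\ts_\nu$, and the argument splits according to how many of the $\psi^*(f_i)$ fail to be units at the generic point of $D$.

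First, if \emph{none} of the $f_i$ has $H$ in its divisor, then every $\psi^*(f_i)$ has order zero along $D$, and applying the uniqueness characterization of $\ts_D$ with $\nu_D(\psi^*(f_1))=0$ gives $\ts_D(\psi^*(\alpha))=0$ directly. Second, if \emph{exactly one} $f_i$ (say $f_1$, after reordering) has $H$ in its divisor, then $\psi^*(f_1)$ has nonzero order along $D$ while $\psi^*(f_2),\ldots,\psi^*(f_n)$ are units along $D$, and the tame symbol formula yields
$$\ts_D(\psi^*(\alpha)) \;=\; \pm\,\nu_D(\psi^*(f_1))\cdot\overline{\psi^*(f_2)}\wdw\overline{\psi^*(f_n)}.$$
Since each $f_j$ with $j\geq 2$ is regular and non-vanishing at the generic point of $H$, the restriction $\overline{\psi^*(f_j)}$ factors as $(f_j|_H)\circ(\psi|_D)$ and therefore lies in the image of $\k(H)^\t\hookrightarrow \k(D)^\t$. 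Because $D$ is contracted under $\psi$ we have $\dim H<\dim D$, so the transcendence degree over $\k$ of the $n-1$ functions $\overline{\psi^*(f_j)}$ is at most $\dim H<\dim D$, and Lemma \ref{lemma:tr.deg.zero} forces $[D,\ts_D(\psi^*(\alpha))]=0$.

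The only technical nuisance is ensuring $\nu_D$ is a genuine discrete valuation on $\k(\wt Y)$, which requires $\wt Y$ to be normal at the generic point of $D$; this can be arranged by composing $\psi$ with a resolution of singularities and invoking the alteration relation to replace $\wt Y$ by a smooth model without changing the value of $[D,\psi^*(\alpha)]$. Once this setup is fixed, the proof reduces to two clean observations already in the toolkit: the tame symbol annihilates wedges of units, and otherwise it descends to data pulled back from $H$, at which point Lemma \ref{lemma:tr.deg.zero} finishes the job.
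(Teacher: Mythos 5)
Your proof is correct and is essentially the paper's own argument: pass to the contrapositive, use the tame-symbol characterization to write $\ts_D(\psi^*(\alpha))=\pm\,\nu_D(\psi^*(f_1))\cdot\overline{\psi^*(f_2)}\wdw\overline{\psi^*(f_n)}$, observe that each $\overline{\psi^*(f_j)}$ is pulled back from $H$ along $\psi|_D$, and conclude by Lemma \ref{lemma:tr.deg.zero} since $\dim H<\dim D$; in fact your justification of the pull-back step (that $f_j$ is regular and invertible at the generic point of $H$, so its restriction factors through $\k(H)$) is precisely the point that the paper's terser proof leaves implicit, where it deduces the transcendence-degree bound only from ``$D$ is contracted''. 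One inessential slip: $H\subset\supp((f_1))$ does \emph{not} imply $\nu_D(\psi^*(f_1))\ne 0$ (take $f_1=x/y$ on a surface and $D$ the exceptional divisor over the origin, where $\psi^*(f_1)$ is a unit along $D$), but this claim is never actually used in your argument, since when $\nu_D(\psi^*(f_1))=0$ your displayed formula already gives $\ts_D(\psi^*(\alpha))=0$.
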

\begin{proof}
    If for any $i$, $\ord_D(f_i)=0$, then $[D,\ph^*(\alpha)]=0$. So for some $i$, we have $\ord_D (f_i)\ne 0$. We can assume that $i=1$. Assume by contradiction that for any $j\ne 1 $, we have $\ord_D(f_j)=0$. Then $\ts_D(\ph^*(\alpha))=\nu_D(\ph^*(f_1))\wedge \res{\ph^*(f_2\wdw f_n)}{D}$. Since $D$ is contracted under $\ph$, the transcendence degree of the field $\k(\res{\ph^*(f_2)}{D},\dots, \res{\ph^*(f_n)}{D})$ is strictly smaller than $\dim D$. So by Lemma \ref{lemma:tr.deg.zero}, we have $[D,\ph^*(\alpha)]=0$. A contradiction.
\end{proof}

\begin{lemma}
\label{lemma:xi:ijk}
    Let $Y$ be a smooth $m$-dimensional variety, $\ph\cl \wt Y\to Y$  an alteration and $D$ a divisor on $\wt Y$. Let $f_1,\dots, f_{2m}$ be non-zero rational functions on $Y$. Set $\alpha=f_1\wdw f_{2m}$. Assume that for some $1\leq i_1<\dots <i_k\leq 2m$ and some $n_2,\dots, n_k\in\Z$, the restriction of the rational function $\ph^*(f_{i_1}f_{i_2}^{n_2}\dots f_{i_k}^{n_k})$ to $D$ is  constant. Then $[D, \ts_D(\ph^*(\alpha))]=0.$
\end{lemma}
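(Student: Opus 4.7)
The plan is to use the multiplicative relation on $f_{i_1},\ldots,f_{i_k}$ to rewrite $\alpha$ so that $\ph^*(g)$, with $g=f_{i_1}f_{i_2}^{n_2}\cdots f_{i_k}^{n_k}$, appears as a single wedge factor of $\ph^*(\alpha)$. Then split $\ph^*(g)$ into its constant residue plus a principal unit so that every surviving summand of the tame symbol carries a factor which is either a constant in $\k^\t$ (killed by Corollary \ref{cor:constant_zero}) or the identity $1$ of $\k(D)^\t$ (literally zero in the exterior algebra).

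First I would rewrite $\alpha$. In $\k(Y)^\t_\Q$ the identity $f_{i_1}=g-\sum_{j=2}^{k}n_j f_{i_j}$ holds, the multiplicative relation read additively on the group $\k(Y)^\t$. Substituting it into $\alpha=f_1\wdw f_{2m}$, each term involving $f_{i_j}$ for $j\geq 2$ has $f_{i_j}$ appearing twice and so vanishes by antisymmetry of the wedge. What survives is
$$\alpha = f_1\wdw f_{i_1-1}\wedge g\wedge f_{i_1+1}\wdw f_{2m}.$$
Pulling back by $\ph$ replaces each factor by its $\ph$-pullback and puts $\ph^*(g)$ in the $i_1$-slot. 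By hypothesis $\nu_D(\ph^*(g))=0$ and $\overline{\ph^*(g)}=c\in\k^\t$; set $h:=\ph^*(g)/c$, so that $\nu_D(h)=0$ and $\bar h=1$. From $\ph^*(g)=c\cdot h$, additively we have $\ph^*(g)=c+h$, and consequently
$$\ph^*(\alpha)=\alpha_1+\alpha_2,$$
where $\alpha_1$ has $c$ in the $i_1$-slot and $\alpha_2$ has $h$ there.

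Next I would invoke the standard tame symbol formula. Choosing a uniformizer $\pi$ at $D$ and writing each factor of $\ph^*(\alpha)$ as $a_i=\pi^{e_i}u_i$ with $u_i$ a unit, the defining property of $\ts_D$ combined with multilinearity and the identity $\pi\wedge\pi=0$ in the exterior algebra yields
$$\ts_D(a_1\wdw a_{2m})=\sum_{i=1}^{2m}(-1)^{i-1}e_i\,\bar u_1\wdw\widehat{\bar u_i}\wdw\bar u_{2m}.$$
In both $\alpha_1$ and $\alpha_2$ the $i_1$-slot has $e_{i_1}=0$, so the $i=i_1$ summand disappears and every surviving summand contains $\bar c=c$ (in the $\alpha_1$ case) or $\bar h=1$ (in the $\alpha_2$ case) as a wedge factor. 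The $\alpha_2$-residue therefore vanishes outright, and the $\alpha_1$-residue lies in $\L^{2m-1}\k(D)^\t_\Q$ with the constant factor $c\in\k^\t$, so by Corollary \ref{cor:constant_zero} its class in $\widehat B_m(\k)$ is zero. Summing gives $[D,\ts_D(\ph^*(\alpha))]=0$, as required.

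The main obstacle is the bookkeeping inside the tame symbol formula: one has to notice that $e_{i_1}=0$ is precisely what forces each surviving summand of $\ts_D$ to retain either the constant $c$ or the identity $1$ in the $i_1$-position of the output wedge. Once this observation is in place, the lemma reduces to a single application of Corollary \ref{cor:constant_zero} together with the elementary fact that a wedge product in $\L^*\k(D)^\t_\Q$ containing $1\in\k(D)^\t$ vanishes.
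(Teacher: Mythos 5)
Your proof is correct and takes essentially the same approach as the paper: rewrite $\alpha$ so that $g=f_{i_1}f_{i_2}^{n_2}\cdots f_{i_k}^{n_k}$ occupies the $i_1$-slot (all other terms of the substitution dying by antisymmetry), observe that the tame symbol then carries the constant $c\in\k^\t$ as a wedge factor, and finish with Corollary \ref{cor:constant_zero}. The only difference is that you justify the middle step explicitly, via the splitting $\ph^*(g)=c\cdot h$ with $\bar h=1$ and the term-by-term expansion of $\ts_D$, where the paper simply asserts $\ts_D(\ph^*(\alpha))=c\wedge \ts_D(\ph^*(f_2\wdw f_{2m}))$.
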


\begin{proof}
    We can assume that $(i_1,\dots i_k)=(1,\dots, k)$. Let $g=f_{1}f_{2}^{n_2}\dots f_{k}^{n_k}$. We have: $\alpha = g\wedge f_2\wdw f_{2m}$. We know that the restriction of $\ph^*(g)$ to $D$ is constant. So for some $c\in\k^\t$ we have $\ts_D(\ph^*(\alpha))=c\wedge \ts_D(\ph^*(f_2\wdw f_{2m}))$. The statement follows from Corollary \ref{cor:constant_zero}.  
\end{proof}

\begin{proof}[The proof of lemma \ref{lemma:differential_of_polylogarithms}]\begin{enumerate}
    \item Let $x_1,\dots, x_{m-1}$ be coordinates on $(\P)^{m-1}$. Let $y_1=x_1$ and for $i>1$ we set $y_i=x_i/x_{i-1}$. In the new coordinates $y_i$ we get:
$$[(\P)^{m-1},\omega_m(a)]=[(\P)^{m-1}, \wt\omega_m(a)],$$
where
$$\wt\omega_m(a)=(1-y_1)\wedge y_1\wdw (1-y_{m-1})\wedge y_{m-1}\wedge (y_1\wdw y_{m-1}-a).$$
We set $\alpha = \wt\omega_m(a)$.
Let $\ph\colon \wt Y\to (\P)^{m-1}$ be a proper birational morphism such that $$d([Y,\alpha])=\sum\limits_{D\subset \wt Y}[D,\ts_D(\ph^*(\alpha))].$$
Let us show that for any $D$ contracted under $\ph$ we get $[D,\ts_D(\ph^*(\alpha))]=0$. Denote $H=\ph(D)$ and $\eta_D = [D,\ts_D(\ph^*(\wt\omega_m(a)))]$.

By lemma \ref{lemma:contracted_lies_on_two_divisors} we know that $H$ lies on the divisor of one of the functions $y_i, 1-y_i$ for some $i$. We apply Lemma \ref{lemma:xi:ijk}. Consider several cases:
\begin{enumerate}
    \item $H$ lies on the zeros of $y_i$. In this case the restriction of $\phi^*(1-y_i)$ to $D$ is equal to $1$ and so $\eta_D=0$.
    \item $H$ lies on the zeros of $1-y_i$. This case is similar.
    \item $H$ lies on the poles of $y_i$. In this case the restriction of $\phi^*(y_i/(1-y_i))$ to $D$ is equal to $-1$ and so $\eta_D=0$.
\end{enumerate}

So we have proved that $\eta_D=0$. This implies that
$$d([Y,\alpha])=\sum\limits_{D\subset (\P)^{m-1}}[D,\ts_D(\alpha)].$$
The only non-trivial term corresponds to the divisor $\{y_1\dots y_{m-1}=a\}$. The corresponding expression is given in the statement of the lemma.
\item

Let $Y=X\t(\P)^{m-1}$ and
$$\alpha=(1-y_1)\wedge y_1\wdw (1-y_{m-1})\wedge y_{m-1}\wedge (y_1\wdw y_{m-1}-h(t))\wedge t\in \L^{2m}\k(Y)^\t.$$
Denote by $\pi_Y$ the canonical projection $Y\to X$. We have
$$[Y,\omega_m(X,h(t))\wedge t,\pi_Y]=[Y,\alpha,\pi_Y].$$
Choose some proper birational morphism $\ph\cl \wt Y\to Y$, such that
$$d([Y, \alpha, \pi_Y])=\sum\limits_{D\subset \wt Y}[D, \ts_D(\beta), \pi_Y\circ\ph \circ j_D].$$
In this formula $\beta=\ph^*(\alpha)$ and $j_D$ is the canonical embedding $D\to \wt Y$. Set $\eta_D=[D, \ts_D(\beta), \pi_Y\circ\ph \circ j_D]$. Let us show that for any $D$ contracted under $\ph$ we have $\eta_D=0$.

Denote by $\pi_D$ the natural projection $D\to X$.  We need to show that $\eta_D=0$.

 Let $H$ be the image of $D$ under $\ph$. Denote by $\pi_H$ the natural projection $H\to X$. If $\pi_H$ is dominant, then by the previous item and Lemma \ref{lemma:restriction_to_generic_point} we get $\eta_D=0$. So we can assume that $\pi_H$ is constant. Denote the image of $\pi_H$ by $w\in X$. We have $(i_w)_*(\pi_*(\eta_D))=\eta_D$. So it is enough to check that $\eta'_D=\pi_*(\eta_D)$ is zero. Set $\beta = \ph^*(\alpha)$. We have $\eta'_D=[D,\ts_D(\beta)]$.

 Assume that $w\ne 0,\infty$. Then the restriction of $\ph^*(t)$ to $D$ is constant and so by Lemma \ref{lemma:xi:ijk} we have $\eta'_D=0$. So $w\in\{0,\infty\}$. We will consider only the case $w=0$, the other case is similar.

Assume that $H$ is contained in the divisor of one of the functions $y_i,1-y_i$. In this case, similarly to the proof of the previous item, one can show that $\eta_D'=0$. So, by Lemma \ref{lemma:contracted_lies_on_two_divisors}, we can assume that $H$ is contained in the divisor $\{y_1\dots y_{m-1}=h(t)\}$. Denote $f=y_1\dots y_{m-1}$ and $g=f-h(t)$. We apply Lemma \ref{lemma:xi:ijk}. Consider several cases:
\begin{enumerate}
    \item $h(0)=0$. In this case the restriction of $\ph^*(g/f)$ to $D$ is equal to $1$.
    \item $h(0)\in\k^\t$. In this case the restriction of $\ph^*(f)$ to $D$ is equal to $h(0)\in\k^\t$.
    \item $h(0)=\infty$. We write $h(t)=\wt h(t)t^{-r}$ for some function $\wt h$, such that $\wt h(0)\in \k^\t$. The restriction of $\ph^*(gt^r)$ to $D$ is equal to $-\wt h(0)\in\k^\t$ and so $\eta'_D=0$.
\end{enumerate}
We conclude that
$$d([Y, \alpha, \pi_Y])=\sum\limits_{D\subset Y}[D, \ts_D(\alpha), \pi_Y \circ j_D].$$
The expression $[D, \ts_D(\alpha), \pi_Y \circ j_D]$ is non-zero only if $D$ is equal to one of the following divisors: $\{y_1\dots y_{m-1}=h(t)\}, \{t=0\}, \{t=\infty\}$. The corresponding terms given in the statement of the lemma.

\end{enumerate}
\end{proof}

\bibliographystyle{alpha} 
\bibliography{mylib} 
Vasily Bolbachan\\
Skolkovo Institute of Science and Technology, 3 Nobelya Str., 121205 Moscow, Russia;\\ HSE University, HSE-Skoltech International Laboratory of Representation
Theory and Mathematical Physics, 119048, Russia, Moscow, Usacheva str., 6\\
\emph{E-mail:} \texttt{vbolbachan{\fontfamily{ptm}\selectfont @}gmail.com}
\end{document}